\documentclass[11pt]{amsart}

\usepackage{amsmath}
\usepackage{amssymb,amsfonts,mathrsfs, amsthm, stmaryrd}
\usepackage[colorlinks=true,linkcolor=blue,citecolor=blue]{hyperref}
\usepackage[margin=2 cm,heightrounded=true,centering]{geometry}

\tolerance=2000 
\emergencystretch=20pt 

\usepackage{amscd}
\usepackage{verbatim}
\usepackage{euscript}



\newcommand{\dtime}{\frac{d}{d t}} 
\newcommand{\Lie}[2]{\mathcal{L}_{#1} #2} 
\newcommand{\dvar}[1]{\partial_{#1}} 


\newcommand{\amb}{\mathcal{O}} 
\newcommand{\sP}{\mathcal{P}} 

\newcommand{\gbar}{\overline{g}}
\newcommand{\inn}[2]{\left\langle #1, #2 \right\rangle}

\newcommand{\dvh}{dv^h} 


\newcommand{\sT}{\mathcal{T}}

\newcommand{\sO}{\mathcal{O}}


\renewcommand{\P}{\mathcal{P}}        

\newcommand{\bR}{\mathbb{R}}


\newcommand{\id}{\mathrm{id}}

\newcommand{\tr}{\mathrm{tr}}       
\newcommand{\snorm}[2]{||#1||_{#2}} 
\newcommand{\vep}{\varepsilon}

\newcommand{\overarrow}[1]{\xrightarrow{ \hspace{0.5cm} #1 \hspace{0.5cm}}}


\newtheorem{main}{Theorem}

\newtheorem{thm}{Theorem}[section]

\newtheorem{lemma}[thm]{Lemma}
\newtheorem{prop}[thm]{Proposition}


\keywords{High-order geometric flows, ambient obstruction tensor, Bach flow, high-order DeTurck trick, short-time existence and uniqueness}

\subjclass[2000]{53C44, 58J35, 35K25, 35K59}


\begin{document}
\title[Geometric flows]{Uniqueness for Some Higher-Order Geometric Flows}
\author{Eric Bahuaud}
\address{Department of Mathematics,
Seattle University.}
\email{bahuaude(at)seattleu.edu}

\author{Dylan Helliwell}
\address{Department of Mathematics, 
Seattle University.}
\email{helliwed(at)seattleu.edu}

\date{\today}

\begin{abstract}
We show that solutions to certain higher-order intrinsic geometric flows on a compact manifold, including some flows generated by the ambient obstruction tensor, are unique.  With the goal of providing a complete self-contained proof, details surrounding map covariant derivatives and a careful application of the DeTurck trick are provided.
\end{abstract}

\maketitle


\section{Introduction}

An intrinsic curvature flow on a compact smooth manifold is a time-dependent Riemannian metric that satisfies a system of the form
\begin{equation}
\label{intro-flow}
\left\{ \begin{aligned} 
\dvar{t} g &= T^{g} \\
g(0) &= h
\end{aligned} \right.
\end{equation}
where $T^g$ is a natural tensor constructed from contractions of curvature (see equation \eqref{ansatz} below and the following section for a precise statement).  The Ricci flow is undoubtably the most studied example, but there has also been recent interest in higher-order geometric flows based on the negative gradients of quadratic curvature functionals including the Bach tensor and more generally the ambient obstruction tensors.

The purpose of this note is to elucidate the uniqueness proof carefully in the higher-order setting.  While uniqueness of certain fourth-order gradient flows of quadratic curvature functionals was addressed in \cite{Bour} and \cite{Streets}, we feel the treatment of uniqueness in those papers was somewhat terse and it would be worthwhile to have a complete self-contained proof available in the literature.  Moreover our work here is applied to various other higher-order flows, including those by the ambient obstruction tensor that we introduced in \cite{BahuaudHelliwell}.

When working with \eqref{intro-flow} the diffeomorphism invariance of $T^g$ complicates the proof of local existence and uniqueness as the system \eqref{intro-flow} is not strictly parabolic.  The standard argument for resolving this problem is the so-called ``DeTurck trick,'' which we briefly explain.  For the proof of local existence, an analysis of the principal symbol of the linearization of $T^g$ leads to the definition of an appropriate time-dependent vector field $W^{g,h}$ requiring $g$ and also a background reference metric $h$ such that the adjusted system
\begin{equation}
\label{intro-adj-flow}
\left\{ \begin{aligned} 
\dvar{t} g &= T^{g} + \Lie{W^{g,h}}{g} \\
g(0) &= h
\end{aligned} \right.
\end{equation}
is strictly parabolic.  Now classical parabolic theory may be applied to obtain a solution for a short time.  Then one solves for a time-dependent diffeomorphism $\varphi: I \times M \to M$ that satisfies
\begin{equation}
\label{intro-vf-flow}
\left\{ \begin{aligned} 
\dvar{t} \varphi &= -W^{g,h} \circ \varphi \\
\varphi(0) &= \id.
\end{aligned} \right.
\end{equation}
Finally, it may be checked that the time-dependent metric $\gbar(t) = [\varphi(t)]^* g(t)$ satisfies \eqref{intro-flow} for a short time, completing the proof of local existence.  

The proof of uniqueness is more subtle and involves three major steps.  First, uniqueness for the adjusted flow \eqref{intro-adj-flow} is established.  For second order flows, the maximum principle is a key tool, but for higher-order flows, we instead derive integral energy estimates.  Second, the vector field $W^{g,h}$ is expressed in terms of the metric $\gbar$ that solves \eqref{intro-flow}.  For appropriate choices of $W^{g,h}$, this gives rise to a parabolic equation for the time-dependent diffeomorphism $\varphi$:
\begin{equation*}
\left\{ \begin{aligned} 
\dvar{t} \varphi &= E^{\gbar,h}(\varphi) \\
\varphi(0) &= \id.
\end{aligned} \right.
\end{equation*}
Existence of solutions to this equation is known.  Third, given two solutions $\gbar_1$ and $\gbar_2$ of \eqref{intro-flow} with matching initial conditions, we find time-dependent diffeomorphisms $\varphi_i$, $i = 1,2$, solving
\begin{equation*}
\left\{ \begin{aligned} 
\dvar{t} \varphi_i &= E^{\gbar_i,h}(\varphi_i) \\
\varphi_i(0) &= \id.
\end{aligned} \right.
\end{equation*}
If we then set $g_i(t) = [(\varphi_i(t)]_* \gbar_i(t)$, and let $-W^{g_i, h}$ be the infinitesimal generator of the flow of $\varphi_i$, so that 
\[
E^{\gbar_i,h} = -W^{g_i, h} \circ \varphi_i,
\]
then each $g_i$ satisfies the adjusted flow \eqref{intro-adj-flow}.  Knowing the uniqueness of the adjusted flow now allows us to conclude that $g_1 = g_2$.  Then by uniqueness for ODE's applied to \eqref{intro-vf-flow}, we find $\varphi_1 = \varphi_2$ and so $\gbar_1 = \gbar_2$ on their common interval of existence, completing the proof.

Note that in the case of Ricci flow $(W^{g,h})^{\gamma} = g^{\alpha \beta}\bigl[(\Gamma^g)_{\alpha \beta}^{\gamma} - (\Gamma^h)_{\alpha \beta}^{\gamma}\bigr]$, where $\Gamma^g$ and $\Gamma^h$ are the Christoffel symbols for $g$ and $h$ respectively, and $E^{\gbar,h} = \Delta^{\gbar,h}$, the harmonic map Laplacian.  In the higher-order cases, care must be exercised to obtain an elliptic operator.

The flows of interest in this paper stem from polynomial natural $\binom{2}{0}$-tensors where the top order terms are linear combinations of $\Delta^k Ric$, $\Delta^k S g$, and $\Delta^{k-1} \nabla^2 S$, where $Ric$ and $S$ are Ricci and scalar curvature for $g$.  The ambient obstruction tensor is of this form, as are the gradients of various curvature functionals.  More precisely, for $k\geq 1$ and for constants $a$, $b$, $c$ to be described, set
\begin{equation} \label{ansatz}
T^g_{k,a,b,c} = (-1)^{k+1} c\bigl(\Delta^{k} Ric
			+ a \Delta^{k} S g - b \Delta^{k-1} \nabla^2 S\bigr) + T_{2k+1},
\end{equation}
where $T_{2k+1}$ is a lower order term.  We may also consider the case $k=0$ if we require $b=0$.
Considering flows generated by these tensors, our main result is then the uniqueness part of the following:

\begin{main} \label{mainuniquetheorem}
Let $c > 0$, $b \in \bR$ and $a > -\frac{1}{2(n-1)}$.  Then, on any compact manifold of dimension $n$ and for any smooth initial metric $h$, there exists a unique smooth short-time solution to
\begin{equation}
\tag{$\ast$}\label{originalflow}
\left\{ \begin{aligned} 
\dvar{t} g &= T^g_{k,a,b,c} \\
g(0) &= h.
\end{aligned} \right.
\end{equation}
\end{main}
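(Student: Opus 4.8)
The plan is to carry out the DeTurck procedure sketched in the introduction, with the uniqueness half resting on the three-step argument described there. I would begin by computing the linearization $D T^g_{k,a,b,c}$ and extracting its principal symbol $\sigma[D T^g](\xi)$ at a covector $\xi$. Using the standard linearization formulas for $Ric$ and $S$, the top-order part of this symbol applied to a symmetric $2$-tensor $\dot g$ is, up to the constant $(-1)^{k+1}c$ and a factor $|\xi|^{2k}$, a combination of $|\xi|^2 \dot g$, terms of the form $\xi\otimes(\div \dot g)$, $\xi\otimes\xi\,(\tr \dot g)$, and the $a$- and $b$-trace corrections; the $b$-term lands entirely in the degenerate $\xi\otimes\xi$ direction and so imposes no constraint, explaining why $b$ is arbitrary. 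As for Ricci flow, the failure of ellipticity is confined to the $\xi\otimes(\div)$ directions, and one then defines the time-dependent vector field $W^{g,h}$ so that $\Lie{W^{g,h}}{g}$ is a higher-order analog of the Ricci--DeTurck term --- roughly $(-1)^{k+1}c\,\Delta^{k}$ applied to $g^{\alpha\beta}\bigl(\G{\alpha}{\beta}{\gamma}-(\Gamma^h)_{\alpha\beta}^\gamma\bigr)$ together with lower-order corrections --- chosen to kill the degenerate directions. Decomposing the adjusted symbol into its trace-free and pure-trace blocks produces eigenvalues proportional to $c|\xi|^{2k+2}$ and $c\bigl(\tfrac12+a(n-1)\bigr)|\xi|^{2k+2}$, with a sign making the operator parabolic precisely when these are positive; this is exactly where $c>0$ and $a>-\tfrac1{2(n-1)}$ are used, and the adjusted flow \eqref{intro-adj-flow} is then strictly parabolic. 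Standard linear parabolic theory together with a contraction argument gives short-time existence for \eqref{intro-adj-flow}, and pulling back by the diffeomorphisms solving \eqref{intro-vf-flow} produces a short-time solution of \eqref{originalflow}, settling the existence half.

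For uniqueness of the adjusted flow, where no maximum principle is available at order $2k+2$, I would use energy estimates. Given solutions $g_1,g_2$ of \eqref{intro-adj-flow} with $g_1(0)=g_2(0)=h$, set $u=g_1-g_2$ and consider $\mathcal{E}(t)=\int_M |\nabla^m u|^2\,\dvh$ with $m$ large enough that $\mathcal{E}$ controls enough derivatives of $u$ via Sobolev embedding. Strict parabolicity makes the leading contribution to $\dvar{t}\mathcal{E}$ bounded above by $-\lambda\int_M|\nabla^{m+k+1}u|^2\,\dvh$ after integrating by parts $k+1$ times, while the differences of the lower-order and nonlinear pieces of $T^g_{k,a,b,c}+\Lie{W^{g,h}}{g}$, written as integrals along the segment from $g_1$ to $g_2$, are bounded by $C\mathcal{E}(t)$ using interpolation and the uniform bounds on $g_1,g_2$ over a short time interval. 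Hence $\dvar{t}\mathcal{E}\le C\mathcal{E}$, and since $\mathcal{E}(0)=0$, Gr\"onwall's inequality forces $u\equiv 0$.

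Finally I would close the loop as in the introduction. For the chosen $W^{g,h}$, rewriting $W$ in terms of the solution metric $\gbar$ of \eqref{originalflow} turns \eqref{intro-vf-flow} into an equation $\dvar{t}\varphi=E^{\gbar,h}(\varphi)$ for the time-dependent map $\varphi$, and here the promised careful treatment of map covariant derivatives is needed to verify that $E^{\gbar,h}$ is a genuine degree-$(2k+2)$ elliptic operator on maps --- the higher-order analog of the harmonic map Laplacian --- so that this equation has a short-time solution with $\varphi(0)=\id$, and that such a solution is a diffeomorphism. Given two solutions $\gbar_1,\gbar_2$ of \eqref{originalflow} with the same initial metric, solve $\dvar{t}\varphi_i=E^{\gbar_i,h}(\varphi_i)$ with $\varphi_i(0)=\id$ and set $g_i=(\varphi_i)_*\gbar_i$; letting $-W^{g_i,h}$ be the generator of the flow of $\varphi_i$, each $g_i$ solves \eqref{intro-adj-flow} with $g_i(0)=h$. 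The energy estimate gives $g_1=g_2$, hence $W^{g_1,h}=W^{g_2,h}$, so ODE uniqueness for \eqref{intro-vf-flow} gives $\varphi_1=\varphi_2$, and therefore $\gbar_1=\gbar_2$ on the common interval of existence.

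The main obstacle I expect is pinning down a single DeTurck vector field $W^{g,h}$ that simultaneously renders \eqref{intro-adj-flow} strictly parabolic \emph{and} yields an honestly elliptic operator $E^{\gbar,h}$ on maps, since the latter forces one to develop map covariant derivatives and their commutator identities with care; this is the technical heart of the argument. By comparison, the energy estimate is routine once parabolicity and the structure of the lower-order term $T_{2k+1}$ in \eqref{ansatz} are in hand.
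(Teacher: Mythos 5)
Your outline matches the paper's strategy step for step (DeTurck gauge, energy/Gr\"onwall uniqueness for the adjusted flow, a parabolic equation for the diffeomorphism, then ODE uniqueness), but there is a genuine gap at exactly the point you flag as the ``main obstacle,'' and it is not a formality: the vector field you gesture at does not work for general $b$. Taking $W^{g,h}$ to be (a power of the Laplacian applied to) the Ricci--DeTurck field $V^{g,h}=g^{\alpha\beta}(\Gamma^g-\Gamma^h)_{\alpha\beta}^{\gamma}\partial_\gamma$ alone, ``plus lower-order corrections,'' cannot succeed, because the obstruction is at top order: after killing the $\langle\xi\otimes\xi,\tr(\eta\otimes\eta)\rangle$ term, the remaining quadratic form in $\langle\xi\otimes\xi,\eta\rangle$ and $|\xi|^2\tr\eta$ still carries the coefficient $b$ and is indefinite for general $a,b$. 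The paper's resolution is to use \emph{two} same-order building blocks, $(\Delta^{g,h})^kV^{g,h}$ and $(\Delta^{g,h})^{k-1}Z^{g,h}$ where $(Z^{g,h})^{\gamma}=g^{\mu\alpha}g^{\nu\beta}\nabla^{g,h}_{\mu}\nabla^{g,h}_{\nu}(A^{g,h})_{\alpha\beta}^{\gamma}$, with the specific weights $\alpha=\tfrac12+a-b$ and $\beta=b-a$; this simultaneously cancels the degenerate term and completes the square, leaving $\tfrac12|\xi|^4|\eta|^2+a\langle\xi\otimes\xi-|\xi|^2g,\eta\rangle^2$, whence Cauchy--Schwarz gives strong ellipticity exactly for $a>-\tfrac{1}{2(n-1)}$ and \emph{all} $b$. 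Crucially, both building blocks pull back under $\varphi$ to the same leading term $-(\Delta^{\gbar,h})^{k+1}\varphi$, so the coefficient of the map operator is $\alpha+\beta=\tfrac12$ regardless of $a,b$, and $E^{\gbar,h}$ is elliptic. Without exhibiting such a $W^{g,h}$ (and verifying both properties), the proof is not complete; this construction is the actual content of the theorem's hypotheses on $a,b,c$.

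Two smaller points. First, your energy functional $\mathcal{E}(t)=\int_M|\nabla^m u|^2\,\dvh$ cannot close the argument by itself: $\mathcal{E}\equiv 0$ only forces $\nabla^m u=0$, not $u=0$, and the commutator and nonlinear error terms are naturally bounded by $\snorm{u}{L^2}^2$, which $\mathcal{E}$ does not control. The paper uses $\snorm{u}{L^2}^2+\snorm{\nabla^m u}{L^2}^2$ with $2m$ the order of the operator, G{\aa}rding's inequality for the leading term, and interpolation (no Sobolev embedding is needed). Second, the degenerate directions of the unadjusted symbol are not only of divergence type $\xi\odot v$; the conformal direction $\eta=|\xi|^2g-\xi\otimes\xi$ is where ellipticity genuinely fails for $a\le-\tfrac{1}{2(n-1)}$, and no choice of gauge vector field can repair it --- which is why the threshold on $a$ is sharp rather than an artifact of the method.
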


Existence was established for this type of flow in \cite{BahuaudHelliwell}.  Related work was done by \cite{Streets} on flow by the gradient of $\int_M |Rm|^2\, d v$ and by \cite{Bour} who considered \eqref{originalflow} with $k = 1$.

As described above, the proof of Theorem \ref{mainuniquetheorem} consists of three main steps.  The first step shows existence/uniqueness for strictly parabolic systems, in other words for flows adjusted by the DeTurck vector fields.  

\begin{main} \label{thm:main-adj-flow}
Let $T^g$ be a polynomial natural symmetric $\binom{2}{0}$-tensor for a single metric $g$ in dimension $n$ with the property that there is a polynomial natural vector field $W^{g,h}$ for two metrics $g$ and $h$ in dimension $n$, such that the linearization of $-(T^g+ \Lie{W^{g,h}}{g})$ at any metric $h$ is a strongly elliptic linear operator of order $2m$.  Then, on any compact manifold of dimension $n$ and for any smooth initial metric $h$, there exists a unique smooth short-time solution to
\begin{equation}
\tag{$\ast \ast$}\label{originalflow-adj}
\left\{ \begin{aligned} 
\dvar{t} g &= T^g+ \Lie{W^{g,h}}{g} \\
g(0) &= h.
\end{aligned} \right.
\end{equation}
\end{main}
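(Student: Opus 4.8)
The plan is to realize \eqref{originalflow-adj} as a quasilinear parabolic system and invoke standard linear parabolic theory together with a fixed-point (or contraction-mapping) argument; the hypotheses are tailored precisely to make this work. First I would observe that since $T^g + \Lie{W^{g,h}}{g}$ is a polynomial natural expression in $g$ and its derivatives up to order $2m$ (and in $h$ and its derivatives, with $h$ fixed), the flow equation has the schematic form $\dvar{t} g = -L_{g} g + F(g, \ldots, \partial^{2m-1} g)$, where $L_g$ is the linearization at the ``frozen'' metric $g$ and $F$ collects lower-order terms. The key structural input is the hypothesis that for every metric, the linearization of $-(T^g + \Lie{W^{g,h}}{g})$ is strongly elliptic of order $2m$; this is an open condition, so it persists for metrics $C^0$-close to $h$, and it is exactly what is needed to treat the equation as a (nonlinear) parabolic system to which the Solonnikov-type $L^p$ or Hölder parabolic estimates apply.

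Next I would set up the iteration. Fix a large Sobolev (or little Hölder) space in which $h$ lives, and on a small time interval $[0, \tau]$ consider the affine map $g \mapsto \hat g$ where $\hat g$ solves the \emph{linear} parabolic initial value problem $\dvar{t} \hat g = D(T^{g} + \Lie{W^{g,h}}{g})|_{g}\,\hat g + (\text{remaining terms evaluated at } g)$, $\hat g(0) = h$ — i.e. freeze the coefficients using the current iterate $g$ while keeping the unknown linear. Strong ellipticity of the frozen linear operator, uniform on a neighborhood of $h$, gives existence, uniqueness, and maximal regularity estimates for this linear problem with constants controlled on $[0,\tau]$. One then checks that for $\tau$ small this map sends a suitable closed ball around the constant path $t \mapsto h$ into itself and is a contraction; the contraction estimate uses that the difference of two frozen linear problems is controlled by the difference of the iterates times a factor that is small because the time interval is small and because lower-order/coefficient dependence is Lipschitz on bounded sets. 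The fixed point is then the desired short-time solution, and its smoothness follows by parabolic bootstrapping since all the data ($T^g$, $W^{g,h}$, $h$) are smooth.

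For uniqueness I would argue directly with an energy estimate rather than re-running the fixed point. Given two solutions $g_1, g_2$ on a common interval with $g_1(0) = g_2(0) = h$, set $u = g_1 - g_2$. Subtracting the two equations and writing the nonlinear operator as its linearization at $g_1$ plus a remainder, $u$ satisfies a linear parabolic equation $\dvar{t} u = -L_{g_1} u + R$, where $R$ is a sum of terms each of which is a product of a factor bounded on the (compact-in-time) solutions and a derivative of $u$ of order $\leq 2m$, with the top-order factor carrying a coefficient controlled by $\|g_1 - g_2\|$ in lower norms — schematically $R = \sum_{j \le 2m} a_j(t,x)\, \nabla^j u$ with $a_{2m}$ small. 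Testing against $u$ in $L^2$ and using Gårding's inequality from strong ellipticity, $\frac{1}{2}\dtime \|u\|_{L^2}^2 \le -\lambda \|u\|_{H^m}^2 + C\|u\|_{H^m}\|u\|_{L^2} + \cdots$, and then absorbing the cross terms via Young's inequality, yields $\dtime \|u\|_{L^2}^2 \le C \|u\|_{L^2}^2$; Grönwall and $u(0) = 0$ force $u \equiv 0$. (If the remainder genuinely needs top-order control one iterates the estimate in higher Sobolev norms, or differentiates the equation, exactly as in the parabolic energy method sketched in the introduction.)

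The main obstacle I anticipate is \emph{bookkeeping the structure of the remainder terms} carefully enough to run these estimates: one must verify that after subtracting linearizations, every term that survives is genuinely lower order \emph{or} comes with a coefficient that vanishes as the two metrics (resp.\ the iterate and $h$) come together, and one must keep the strong-ellipticity constant uniform over the relevant neighborhood and time interval. This is where ``polynomial natural'' is doing real work — it guarantees the coefficients are smooth functions of $g$, $h$ and finitely many derivatives, hence Lipschitz on bounded sets — and where a careful treatment of map-covariant derivatives (promised in the abstract) is needed to make the naturality and the commutator terms explicit. The elliptic/parabolic machinery itself (Solonnikov estimates, Gårding's inequality, Grönwall) is entirely standard once the equation is in the stated quasilinear form.
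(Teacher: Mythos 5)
Your overall strategy for uniqueness --- subtract the two solutions, isolate a strongly elliptic linearization, apply G{\aa}rding's inequality, and conclude with Gr\"onwall --- is the same as the paper's (the paper does not reprove existence for the adjusted flow; it cites earlier work, so your contraction-mapping sketch is not the point of comparison). However, the single $L^2$ energy estimate you actually display does not close when $m \geq 2$, and closing it is precisely the content of the paper's key step. The difficulty is that the remainder $R$ genuinely contains terms $a \star \nabla^{2m}u$ with $a = O(u)$, and terms involving $\nabla^{j}u$ for $m < j \leq 2m-1$ whose coefficients are bounded but not small. Testing against $u$ and applying G{\aa}rding produces a good term $-\lambda \snorm{u}{L^2_m}^2$ controlling only $m$ derivatives, whereas any estimate of $\int_M \inn{R}{u}\,\dvh$ (after H\"older, interpolation, and Young) inevitably leaves a loss of the form $\vep \snorm{\nabla^{2m}u}{L^2}^2$, which $-\lambda\snorm{u}{L^2_m}^2$ cannot absorb. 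Your inequality $\tfrac{1}{2}\dtime\snorm{u}{L^2}^2 \leq -\lambda\snorm{u}{L^2_m}^2 + C\snorm{u}{L^2_m}\snorm{u}{L^2}+\cdots$ hides exactly this term in the ``$\cdots$''. Your parenthetical escape hatch (``iterate in higher Sobolev norms'') is the correct instinct, but it is the heart of the proof rather than a contingency: Proposition \ref{differentialinequalityprop} runs the energy estimate simultaneously at the levels $k=0$ and $k=m$ and tracks the combined quantity $\snorm{w}{L^2}^2 + \snorm{\nabla^m w}{L^2}^2$, so that the G{\aa}rding good term $-c_m\snorm{\nabla^{2m}w}{L^2}^2$ generated at level $k=m$ absorbs the $\vep\snorm{\nabla^{2m}w}{L^2}^2$ losses from the quadratic remainder at both levels. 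Without that pairing (or an equivalent device) the argument fails for every flow this paper is actually about.

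Two smaller structural differences are worth noting. You linearize at the time-dependent solution $g_1(t)$; the paper instead writes $g_i = h + u_i$ and linearizes at the fixed initial metric $h$, so that $L = L^h$ is a time-independent strongly elliptic operator (matching the form in which G{\aa}rding's inequality is invoked) and the remainder has the explicit product structure $(\nabla^h)^{j_1}u\star\cdots\star(\nabla^h)^{j_l}u$ with $l\geq 2$, which is what makes the Lipschitz bound $\snorm{Q(u)-Q(v)}{L^2}\leq C_{\vep}\snorm{w}{L^2}+\vep\snorm{\nabla^{2m}w}{L^2}$ of Lemma \ref{quadest} clean. Your reliance on smallness of the top-order coefficient $a_{2m}$ would additionally force you to shrink the time interval and restart the estimate, which the paper's formulation avoids. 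Finally, the hypothesis already asserts strong ellipticity of the linearization at every metric, so the openness/persistence remark is unnecessary.
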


Note that uniqueness of strictly parabolic systems can be derived using energy methods, and was recently proved in \cite{MM} for systems with a `product type' top order term.  For completeness we give a slight adaptation of this argument. 

The second step in the proof of Theorem \ref{mainuniquetheorem} shows that an appropriate elliptic operator for the diffeomorphism may be found to reduce the uniqueness question for the pure geometric flow \eqref{originalflow} to the uniqueness of the adjusted flow in \eqref{originalflow-adj}.  It was some of the technical difficulty in this step that led us to write this paper, and we indicate some of these technicalities along the way.

The third step in the proof of Theorem \ref{mainuniquetheorem} combines uniqueness for the adjusted flow with existence of solutions to the parabolic problem arising from adjusting the flow to attain uniqueness for the pure flow.  This step does not have any significant challenges, but is included for completeness.

A special case of interest is flow by the ambient obstruction tensor which is described in more detail in Section \ref{ambientsection} .  An immediate application of Theorem \ref{mainuniquetheorem} is the following:

\begin{main} \label{ambientflowtheorem}
On any compact manifold of even dimension $n \geq 4$, for all $\alpha > 0$ and any smooth initial metric $h$, there is a unique smooth short-time solution to the following flow:
\begin{equation*}
\left\{ \begin{aligned} 
\dvar{t} g &= \sO_n + (-1)^{\frac{n}{2}} \alpha (\Delta^{\frac{n}{2}-1} S) g \\
g(0) &= h.
\end{aligned} \right.
\end{equation*}
where $\sO_n$ and $S$ are the ambient obstruction tensor and scalar curvature for $g$.
\end{main}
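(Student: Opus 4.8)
The plan is to recognize the flow in Theorem \ref{ambientflowtheorem} as an instance of the pure geometric flow \eqref{originalflow} and then invoke Theorem \ref{mainuniquetheorem}. The essential input is the leading-order structure of the obstruction tensor, recalled in Section \ref{ambientsection}: in even dimension $n \geq 4$, writing $k = \tfrac n2 - 1 \geq 1$, the tensor $\sO_n$ is a polynomial natural symmetric $\binom{2}{0}$-tensor, and with the sign convention fixed there it has the form
\begin{equation*}
\sO_n = (-1)^{k+1} c \Bigl( \Delta^{k} Ric - \frac{1}{2(n-1)}(\Delta^{k} S)\, g - b\, \Delta^{k-1}\nabla^2 S \Bigr) + T_{2k+1}
\end{equation*}
for a positive constant $c$ (a multiple of the normalization constant of $\sO_n$), a real constant $b$, and a lower-order polynomial natural term $T_{2k+1}$; equivalently, $\sO_n = T^g_{k,\, -\frac{1}{2(n-1)},\, b,\, c}$ in the notation of \eqref{ansatz}. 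The coefficient $-\tfrac{1}{2(n-1)}$ of $(\Delta^k S)g$ reflects the fact that, to top order, $\sO_n$ is a nonzero constant times $\Delta^{k}$ applied to the Schouten tensor $P = \tfrac{1}{n-2}\bigl(Ric - \tfrac{S}{2(n-1)} g\bigr)$, corrected by a multiple of $\Delta^{k-1}\nabla^2 S$; for $n = 4$, $\sO_4$ is the Bach tensor and $k = 1$.

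With this in hand I would absorb the extra term into the ansatz. Since $k = \tfrac n2 - 1$ we have $(-1)^{n/2} = (-1)^{k+1}$, so
\begin{equation*}
\sO_n + (-1)^{n/2}\alpha (\Delta^{k} S)\, g = (-1)^{k+1} c \Bigl( \Delta^{k} Ric + \Bigl( -\frac{1}{2(n-1)} + \frac{\alpha}{c} \Bigr)(\Delta^{k} S)\, g - b\, \Delta^{k-1}\nabla^2 S \Bigr) + T_{2k+1},
\end{equation*}
where the commutator corrections produced by the metric factor in $(\Delta^k S)g$ are of lower order and have been absorbed into $T_{2k+1}$. Hence the right-hand side of the flow in Theorem \ref{ambientflowtheorem} is precisely $T^g_{k,\, a,\, b,\, c}$ with $c > 0$, $b \in \bR$, and $a = -\frac{1}{2(n-1)} + \frac{\alpha}{c}$. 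Since $\alpha > 0$ and $c > 0$, we get $a > -\frac{1}{2(n-1)}$; and since $n \geq 4$ is even, $k \geq 1$, so the exceptional case $k = 0$ (which would force $b = 0$) does not arise. Thus all hypotheses of Theorem \ref{mainuniquetheorem} are met, and applying Theorem \ref{mainuniquetheorem} on the given compact manifold with initial metric $h$ yields a unique smooth short-time solution, which is the assertion of Theorem \ref{ambientflowtheorem}.

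The one step that genuinely requires care is the first: pinning down the top-order coefficients of $\sO_n$, and in particular that the coefficient of $(\Delta^k S)g$ relative to that of $\Delta^k Ric$ is exactly the threshold value $-\tfrac{1}{2(n-1)}$ appearing in Theorem \ref{mainuniquetheorem}. This is precisely why the hypothesis $\alpha > 0$ is both needed and sufficient: adding $(-1)^{n/2}\alpha (\Delta^k S)g$ moves the parameter $a$ off the boundary of the admissible interval and strictly into its interior, while leaving the other structural hypotheses of Theorem \ref{mainuniquetheorem} untouched. Everything else is bookkeeping with \eqref{ansatz}, together with the observation that $\sO_n$, $(\Delta^k S)g$, and hence their sum are polynomial natural tensors to which Theorem \ref{mainuniquetheorem} applies.
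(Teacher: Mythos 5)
Your proposal is correct and follows exactly the paper's (very brief) argument: identify $\sO_n$ as $T^g_{k,a,b,c}$ with $k=\tfrac n2-1$, $c>0$, and $a$ at the critical value $-\tfrac{1}{2(n-1)}$, observe that adding $(-1)^{n/2}\alpha(\Delta^{k}S)g$ shifts $a$ to $-\tfrac{1}{2(n-1)}+\tfrac{\alpha}{c}>-\tfrac{1}{2(n-1)}$, and apply Theorem \ref{mainuniquetheorem}. (One minor remark: since $\nabla g=0$, no commutator corrections arise from the metric factor in $(\Delta^k S)g$, so that caveat is unnecessary.)
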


The remainder of this paper is organized as follows.  In Section \ref{background} we review background material including elliptic operators, G{\aa}rding's inequality, polynomial natural tensors and the definition of the ambient obstruction tensor.  We prove the uniqueness part of Theorem \ref{thm:main-adj-flow} in Section \ref{uniqueness:adj-flow}.  In Section \ref{mcd} we describe the map covariant derivative.  Finally in Section \ref{uniqueness:pure-flow}, we complete the proof of Theorem \ref{mainuniquetheorem}.

We are happy to thank Jack Lee for helpful discussions during the course of this work.


\section{Background} \label{background}

Let $M$ be a smooth compact manifold of dimension $n$.  Let $h$ be a given smooth metric.  This will serve as a background metric and will also be the initial condition for the flow.  We will also consider the space of square-integrable tensors $F$ with respect to this metric, denoted by $L^2(M,h)$ and equipped with the norm
\[ || F ||_{L^2} = \left( \int_M (|F|^h)^2\, \dvh \right)^{\frac{1}{2}}. \]
We refer the reader to Appendix A of \cite{Besse} for the definitions of the Sobolev spaces $L^2_k$ (the set of tensors with $k$ $h$-covariant derivatives in $L^2$), and the general setting of linear differential operators between vector bundles. 

\subsection{Strong ellipticity and G{\aa}rding's inequality}

Letting $\Sigma^2(M)$ be the bundle of symmetric $\binom{2}{0}$-tensors, consider a linear differential operator $P: C^{\infty}( \Sigma^2(M)) \longrightarrow C^{\infty}( \Sigma^2(M) )$ of order $2m$ with possibly time-dependent coefficients given in local coordinates $\{x^1, \ldots, x^n\}$ by
\[
P = \sum_{|\beta| \leq 2m} a_{\beta} D^{\beta},
\]
where $\beta = ( \beta_1, \cdots, \beta_n )$, $D^{\beta} = \partial_{x^1}^{\beta_1} \cdots \partial_{x^n}^{\beta_n}$, and $a_{\beta} = (a_{\beta}(x,t))_K^J$, where $J$ and $K$ range from 1 to $N = \dim(\Sigma^2(M))$.  For $x \in M$, fixed $t$ and $\xi \in T^*_x M$, the principal symbol of $P$, $\sigma_{\xi}(P): \Sigma^2(M)_x \longrightarrow \Sigma^2(M)_x$ is given by 
\[
\sigma_{\xi}(P)^J_K = (-1)^m \sum_{|\beta| = 2m} ( a_{\beta}(x,t))^J_K \xi^{\beta},
\]
and we say $P$ is strongly elliptic if there exists a time-independent constant $\Lambda > 0$ such that
\[
(-1)^m \sum_{|\beta| = 2m} ( a_{\beta}(x,t))^J_K \xi^{\beta} \eta_J \eta^K \geq \Lambda |\xi|^{2m} |\eta|^2,
\]
for all $x$, $t$, $\xi$ and $\eta$.

One of the key facts about strongly elliptic operators that we will need is the following version of G{\aa}rding's inequality, which we will use for time-independent operators $P$.

\begin{thm} \label{Garding}
Let $P: C^{\infty}( \Sigma^2(M)) \longrightarrow C^{\infty}( \Sigma^2(M) )$ be a strongly elliptic linear operator of order $2m$ on a compact Riemannian manifold $(M, h)$.  Then there are constants $c > 0$ and $K$ such that
\[
\int_M \inn{Pu}{u} \, \dvh \geq c \snorm{u}{L^{2}_m}^2 - K \snorm{u}{L^2}^2.
\]
\end{thm}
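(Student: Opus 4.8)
The plan is to prove G{\aa}rding's inequality by a standard two-step localization argument: first establish the estimate for constant-coefficient operators on $\bR^n$ via the Fourier transform, then patch together local estimates using a partition of unity and absorb the resulting lower-order errors. To begin, I would fix a point $x_0 \in M$, pass to local coordinates and a local trivialization of $\Sigma^2(M)$, and freeze the coefficients of $P$ at $x_0$, obtaining a constant-coefficient operator $P_0$ with principal symbol $\sigma_\xi(P_0) = \sigma_\xi(P)|_{x_0}$. For $u$ compactly supported in a small coordinate ball, Plancherel's theorem converts $\int \inn{P_0 u}{u}$ into $\int_{\bR^n} \langle \sigma_\xi(P_0)\hat u(\xi), \hat u(\xi)\rangle\, d\xi$ plus lower-order symbol contributions; strong ellipticity gives $\langle \sigma_\xi(P_0)\hat u, \hat u\rangle \geq \Lambda |\xi|^{2m}|\hat u|^2$, and interpolation (Young's inequality, $|\xi|^j \leq \epsilon |\xi|^{2m} + C_\epsilon$) controls the lower-order terms, yielding $\int \inn{P_0 u}{u} \geq c_0 \snorm{u}{L^2_m}^2 - K_0 \snorm{u}{L^2}^2$.

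Next I would handle the genuine (variable-coefficient) operator on a small ball. Since the coefficients $a_\beta$ are continuous and $M$ is compact, for any $\delta > 0$ there is a radius $r$ so that $|a_\beta(x) - a_\beta(x_0)| < \delta$ whenever $|x - x_0| < r$, uniformly in $x_0$ (and in $t$, since the ellipticity constant $\Lambda$ is time-independent and the coefficients may be taken continuous on the compact time interval as well, or the argument is run at each fixed $t$ with uniform constants). Writing $P = P_0 + (P - P_0)$, the difference has top-order coefficients of size $<\delta$, so $|\int \inn{(P-P_0)u}{u}| \leq C\delta \snorm{u}{L^2_m}^2 + C'\snorm{u}{L^2_m}\snorm{u}{L^2_{m-1}}$; choosing $\delta$ small and again using interpolation between $L^2_m$, $L^2_{m-1}$ and $L^2$ to absorb the cross term, one gets a local G{\aa}rding estimate $\int_{B_r} \inn{Pu}{u} \geq \tfrac{c_0}{2}\snorm{u}{L^2_m}^2 - K_1\snorm{u}{L^2}^2$ valid for all $u$ supported in $B_r$, with constants independent of the center.

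Finally I would globalize via a partition of unity. Cover $M$ by finitely many such balls $B_{r_i}$ and pick a subordinate partition of unity $\{\chi_i\}$ with $\chi_i \geq 0$, $\sum \chi_i = 1$; set $\psi_i = \sqrt{\chi_i}$ (smoothed so this is itself a smooth partition of unity squared). Apply the local estimate to each $\psi_i u$ and sum. The main bookkeeping point is that $\sum_i \snorm{\psi_i u}{L^2_m}^2$ controls $\snorm{u}{L^2_m}^2$ up to lower-order terms (commuting derivatives past $\psi_i$ produces only terms of order $\leq m-1$ in $u$), and likewise $\sum_i \int \inn{P(\psi_i u)}{\psi_i u}$ differs from $\int \inn{Pu}{u}$ by an expression involving commutators $[P, \psi_i]$, which have order $\leq 2m-1$ and are therefore bounded by $C\snorm{u}{L^2_m}\snorm{u}{L^2_{m-1}} + \ldots$, again absorbable by interpolation with a small constant in front of $\snorm{u}{L^2_m}^2$. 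Collecting everything yields $\int_M \inn{Pu}{u} \geq c\snorm{u}{L^2_m}^2 - K\snorm{u}{L^2}^2$.

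The main obstacle is the careful management of the lower-order and commutator terms: at several stages one produces quantities like $\snorm{u}{L^2_j}$ for $0 < j < m$ or cross terms $\snorm{u}{L^2_m}\snorm{u}{L^2_{m-1}}$, and these must all be dominated by $\epsilon \snorm{u}{L^2_m}^2 + C_\epsilon \snorm{u}{L^2}^2$ with $\epsilon$ chosen small enough, at each step, that the positive $\snorm{u}{L^2_m}^2$ contribution survives. This requires the interpolation inequality $\snorm{u}{L^2_j}^2 \leq \epsilon \snorm{u}{L^2_m}^2 + C_\epsilon \snorm{u}{L^2}^2$ for $0 \le j < m$ on a compact manifold, which is itself standard (it follows from the constant-coefficient Fourier estimate plus a partition of unity, or may simply be cited), and a bit of care to ensure all constants are uniform over the finitely many coordinate patches and independent of $t$. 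Everything else is routine, so I would present the Fourier-side estimate in modest detail and then summarize the localization and absorption steps.
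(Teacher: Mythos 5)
Your outline is the standard proof — Fourier/Plancherel for the frozen-coefficient operator, perturbation on small balls, then a partition of unity with commutator and interpolation absorption — which is exactly the argument the paper defers to its references (Giaquinta for strongly elliptic systems on $\bR^n$, plus the standard local-to-global patching for bundles). The steps and the handling of the lower-order and cross terms are correct, so this matches the paper's (cited) approach.
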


For the proof of G{\aa}rding's inequality for strongly elliptic systems in $\bR^n$, we refer the reader to \cite{Giaquinta}.  The result for vector bundles follows by standard techniques using partitions of unity.  A lucid presentation of some of these local-to-global techniques is also given in the classical text by \cite{Schechter}.  
 
\subsection{Polynomial Natural Tensor Structure}

A tensor $T~=~T^{g_1,~\cdots,~g_m}$ is a polynomial natural tensor of order $k$ for $m$ metrics in dimension $n$ if for any $n$-manifold $M$ and any diffeomorphism $f$ of $M$, we have $T^{f^*g_1, \cdots, f^*g_m} = f^*\bigl(T^{g_1, \cdots, g_m}\bigr)$, and if any any coordinate system its coefficients are polynomials in the coefficients of coordinate derivatives of $g_i$ up to order $k$, and $g_i^{-1}$.  We refer the reader to our earlier paper \cite{BahuaudHelliwell} for further details.

In order to represent terms where the particular structure is not important, let $A \star B$ be a linear combination of contractions of $A \otimes B$.  Furthermore, let $\P(\partial^{p_1} s_1, \partial^{p_2} s_2, \ldots)$ denote an expression with components that are polynomials consisting of contractions in the components of $s_r$ and their coordinate derivatives up to order $p_r$ when represented locally. 

\subsection{The Ambient Obstruction Tensor} \label{ambientsection}

Let $n \geq 4$ be even.  The ambient obstruction tensor arises in the study of conformally compact Einstein metrics and plays an important role in conformal geometry, see \cite{FeffermanGraham1, FeffermanGraham2} and the references therein for examples.  Our interest here is primarily in that the ambient obstruction tensor,\label{amot} $\sO_n$, is a conformally invariant polynomial natural tensor of order $n$ for a metric $g$ in dimension $n$.  This tensor is symmetric, trace free, and divergence free.  For our purposes we will only need the explicit structure of the top order terms.  Let $c_n = 1/[(-2)^{\frac{n}{2}-2}(\frac{n}{2}-2)!]$. Then
\[ \sO_n = c_n \left( \Delta^{\frac{n}{2} - 1} P - \frac{1}{2(n-1)} \Delta^{\frac{n}{2}-2} \nabla^2 S \right) + T_{n-1},\]
where 
\[ P = \frac{1}{n-2} \left( Ric - \frac{1}{2(n-1)} S g \right), \]
and $Ric, S$ are the Ricci and scalar curvatures of $g$, $P$ is the Schouten tensor, and $T_{n-1}$ is a polynomial natural tensor of order $n-1$, the specific structure of which is not important for our analysis.  Note that the ambient obstruction tensor $\amb_n$ is an example of a tensor of the form of equation \eqref{ansatz} with coefficients
\[
k = \frac{n}{2}-1,
a = \frac{-1}{2(n-1)},
b = \frac{(n-2)}{2(n-1)},
c = \frac{1}{(n-2)(2)^{\frac{n}{2}-2} \left(\frac{n}{2} - 2\right)!}
\]

Note that $a$ is at the critical value where Theorem \ref{mainuniquetheorem} fails, thus necessitating the condition that $\alpha > 0$ in Theorem \ref{ambientflowtheorem}.  As a special case, when $n = 4$, we find for the correct choice of $T_{3}$, $T^g_{k,a,b,c}$ is the Bach tensor $B$, ($B = \sO_4$) and we have
\begin{align*}
k = 1,
a = -\frac{1}{6},
b = \frac{1}{3},
c = \frac{1}{2}.
\end{align*}


\section{Uniqueness for the Adjusted Flow} \label{uniqueness:adj-flow}

This section outlines the proof of the uniqueness part of Theorem \ref{thm:main-adj-flow}.  (The proof of existence may be found for example in \cite{BahuaudHelliwell} or \cite{MM}.)  Our argument here is essentially that given in \cite{MM}, but included here for completeness.  After setting up the problem in the appropriate way, we state a technical lemma that allows us to bound certain quadratic terms that arise.  We then construct a differential inequality for a combination of norms of the difference between two candidate solutions and use it to prove uniqueness for the adjusted flow.

\subsection{Setup and estimates}

Suppose that $\sT^{g,h}$ is a polynomial natural tensor that satisfies the hypothesis of Theorem \ref{thm:main-adj-flow}, where the linearization of $\sT^{g,h} = -( T^g + \Lie{W^{g,h}}{g} )$ is strongly elliptic.  Suppose that $g_1(t) = h+u(t)$ and $g_2(t) = h+v(t)$ are two solutions to  \eqref{originalflow-adj} with the same initial metric $h$.  Then following the work done in our earlier paper \cite[Section 4.1]{BahuaudHelliwell}, we may write
\[
\dvar{t} u = \sT^{h+u,h} = I^h + L^h(u) + Q^h(u)
\]
and
\[
\dvar{t} v = \sT^{h+v,h} = I^h + L^h(v) + Q^h(v).
\]
where $I^h = \sT^{h,h}$ is the nonhomogenous term, $L^h(u) = \frac{d}{ds}|_{s=0} \sT^{h+ u(s),h}$ is the linearization at $h$, and $Q^h(u)$ represents a linear combination of ``quadratic'' terms of the form
\begin{equation*} \label{structure-of-Q}
h^{\alpha} \star h^{-\beta} \star (Rm^h)^{\gamma} \star (h+u)^{-\delta}
	\star (\nabla^h)^{j_1} u \star \cdots \star (\nabla^h)^{j_l} u,
\end{equation*}
where $l \geq 2$, $0 \leq j_l \leq 2m$ and $h^{-\beta}$ and $(h+u)^{-\delta}$ represents $\beta$ and $\delta$ copies of the inverses of $h$ and $h+u$ respectively.

With these adjustments made, we note that all geometric objects in this section are built with respect to $h$ and as such, for this section only, unadorned geometric objects are to be interpreted as depending on $h$.

Note that $L = L^h$ is now a polynomial natural time-independent differential operator in the metric $h$, and this fact allows us to use energy methods with respect to $L^2$ spaces defined in terms of $h$.  By hypothesis, $-L$ is strongly elliptic.

Set $w = g_1 - g_2 = u-v$, and observe
\begin{equation} \label{differenceequation}
\dvar{t}w = L(w) + Q(u) - Q(v).
\end{equation}
Let $\nabla$ be the Levi-Civita connection for $h$ and note that
\begin{align*}
\dvar{t} (\nabla^k w) &= \nabla^k (\dvar{t} w) \\
				&= \nabla^k \left(L(w) + Q(u) - Q(v) \right) \\
				&= \hat{L}(\nabla^k w) + \nabla^k\left({Q}(u) - {Q}(v)\right),
\end{align*}
where $\hat{L}$ and $L$ have the same structure at top order and differ by lower order terms involving the curvature of $h$.  

The next Lemma will allow us to estimate the quadratic terms.  The proof follows from the explicit local Lipschitz structure of $Q$ as described in \cite{BahuaudHelliwell} and interpolation inequalities.  The constants that appear depend on the precise algebraic structure of $Q$.

\begin{lemma} \label{quadest}
Let $u$ and $v$ be smooth symmetric two-tensors which vanish at $t = 0$ and let $w = u-v$.  Then there exists a constant $C>0$ such that \[
\snorm{Q(u) - Q(v)}{L^2} \leq C \snorm{w}{L^2_{2m}}.
\]
Moreover, for any $\varepsilon > 0$, there is a constant $C_{\varepsilon} > 0$ such that
\[
\snorm{Q(u) - Q(v)}{L^2} \leq C_{\vep} \snorm{w}{L^2} + \vep \snorm{\nabla^{2m} w}{L^2}.
\]

\end{lemma}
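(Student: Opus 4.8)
The plan is to exploit the explicit ``quadratic'' structure of $Q$ recalled just before the lemma, namely that each term of $Q(u)$ is a sum of contractions
\[
h^{\alpha} \star h^{-\beta} \star (Rm)^{\gamma} \star (h+u)^{-\delta}
	\star \nabla^{j_1} u \star \cdots \star \nabla^{j_l} u,
\]
with $l \geq 2$, $0 \leq j_i \leq 2m$, together with the fact that on a fixed compact manifold with the fixed smooth background metric $h$ all the purely $h$-dependent factors ($h$, $h^{-1}$, $Rm$, their covariant derivatives) are bounded in $C^\infty$. First I would reduce to a single monomial $N(u) = B \star (h+u)^{-\delta} \star \nabla^{j_1}u \star \cdots \star \nabla^{j_l}u$ where $B$ collects the bounded $h$-factors, and estimate the difference $N(u) - N(v)$ by a telescoping sum: write $(h+u)^{-1} - (h+v)^{-1} = -(h+u)^{-1}\star w \star (h+v)^{-1}$ for the inverse factors, and for the derivative factors use
\[
\prod_i \nabla^{j_i}u - \prod_i \nabla^{j_i}v
= \sum_{p} \Big(\prod_{i<p}\nabla^{j_i}u\Big)\star \nabla^{j_p}w \star \Big(\prod_{i>p}\nabla^{j_i}v\Big),
\]
so that in every resulting term $w$ or $\nabla^{j}w$ appears as a factor, exactly once, with the remaining factors being $\nabla$-derivatives of $u$ or $v$ of order $\leq 2m$, plus bounded tensors.

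Next I would apply Hölder's inequality in $L^2$ to each such term, putting the factor carrying $w$ and one other ``principal'' factor into competing $L^{p}$ spaces and the rest (at least $l-2 \geq 0$ further factors of $u$ or $v$, and the bounded $B$-factors) into $L^\infty$. The crucial point is that $u$ and $v$ are \emph{given smooth} solutions on a fixed time interval, hence $u, v$ and all their spatial covariant derivatives are bounded in $C^0$ uniformly in $t$; this immediately controls every $L^\infty$ factor and also already gives the crude bound $\snorm{Q(u) - Q(v)}{L^2} \leq C \snorm{w}{L^2_{2m}}$, since each surviving $w$-derivative has order at most $2m$ and sits in $L^2$ against bounded factors. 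This proves the first displayed inequality.

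For the second (interpolated) inequality I would use the Gagliardo--Nirenberg/interpolation inequalities for tensors on $(M,h)$ (as in Appendix A of \cite{Besse}): for $0 \le j \le 2m$,
\[
\snorm{\nabla^{j} w}{L^{2}} \leq C\, \snorm{w}{L^2}^{1 - j/(2m)}\, \snorm{\nabla^{2m} w}{L^2}^{j/(2m)},
\]
and more generally $L^{p_j}$ interpolation estimates needed to feed Hölder. Because $w = u - v$ vanishes at $t=0$ and $u,v$ are smooth, $\snorm{w}{L^2_{2m}}$ is finite and the interpolation constants are uniform on the time interval. When $j < 2m$ the pure power of $\snorm{\nabla^{2m}w}{L^2}$ is strictly sublinear, so Young's inequality $ab \le \vep a^{1/\theta} + C_\vep b^{1/(1-\theta)}$ absorbs it into $\vep \snorm{\nabla^{2m}w}{L^2} + C_\vep\snorm{w}{L^2}$; the only genuinely top-order contribution comes from terms with some $j_i = 2m$, but then (since $l \ge 2$) every other derivative factor has order $0$, so that factor is an undifferentiated $u$ or $v$ — which vanishes at $t=0$ and whose $C^0$ norm is therefore $O(t)$, hence $\le \vep$ for short time — and this smallness again yields the $\vep$-coefficient on $\snorm{\nabla^{2m}w}{L^2}$.

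The main obstacle is the bookkeeping in this last step: one has to verify that in \emph{every} monomial of $Q(u) - Q(v)$ either the top-order derivative $\nabla^{2m}w$ is accompanied by a genuinely small factor (an undifferentiated $u$ or $v$, small for short time, or a lower-order $\nabla^{j}w$ with $j<2m$ providing a sublinear power after interpolation), so that it can be absorbed, and that no term places two factors in $L^2$ that are both of ``full'' order. This is exactly where the hypotheses $l \ge 2$ and $j_i \le 2m$, the vanishing of $u,v$ at $t=0$, and the a priori smoothness of the two solutions are all used; once the structure of $Q$ from \cite{BahuaudHelliwell} is written out explicitly, each case is a routine application of Hölder, interpolation, and Young, with constants depending only on $n$, $h$, the fixed time interval, and the combinatorial data $(\alpha,\beta,\gamma,\delta,l,j_1,\dots,j_l)$.
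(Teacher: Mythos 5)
Your proposal is correct and follows essentially the route the paper itself indicates (the paper only sketches the argument as following from ``the explicit local Lipschitz structure of $Q$\ldots and interpolation inequalities''): telescope the difference of each monomial so that exactly one factor carries $w$ or $\nabla^{j}w$, bound the remaining factors in $C^0$ using smoothness of $u,v$ and of $h$, then interpolate and absorb with Young's inequality. Your observation that any genuinely top-order factor $\nabla^{2m}w$ must be accompanied by a companion factor of $u$ or $v$ that vanishes at $t=0$, whose $C^0$-norm is therefore small for short time, is exactly the point that produces the $\vep$-coefficient; the resulting short-time restriction is harmless since the Gronwall argument starts from $w(0)=0$ and can be continued.
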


\subsection{A differential inequality}

We now derive a differential inequality for a particular combination of norms of $w$.

\begin{prop} \label{differentialinequalityprop}
Let $w$ solve \eqref{differenceequation}.  Then there is a constant $K$ such that
\begin{equation*}
\dtime \left(\snorm{w}{L^2}^2 + \snorm{\nabla^m w}{L^2}^2\right)
		\leq K \left(\snorm{w}{L^2}^2 + \snorm{\nabla^m w}{L^2}^2 \right).
\end{equation*}
\end{prop}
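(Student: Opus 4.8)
The plan is to differentiate the two norm-squared quantities separately, substitute the evolution equation \eqref{differenceequation} (and its covariantly-differentiated version for $\nabla^m w$), and then absorb all the resulting terms into the right-hand side using G{\aa}rding's inequality (Theorem \ref{Garding}) together with Lemma \ref{quadest}. First I would compute
\[
\dtime \snorm{w}{L^2}^2 = 2 \int_M \inn{\dvar{t} w}{w}\, \dvh = 2\int_M \inn{L(w)}{w}\, \dvh + 2 \int_M \inn{Q(u) - Q(v)}{w}\, \dvh.
\]
The middle term is the one G{\aa}rding controls: since $-L$ is strongly elliptic of order $2m$, Theorem \ref{Garding} gives $\int_M \inn{L(w)}{w}\, \dvh \leq -c \snorm{w}{L^2_m}^2 + K_0 \snorm{w}{L^2}^2$ for constants $c > 0$, $K_0$. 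For the quadratic term, Cauchy--Schwarz plus the first estimate of Lemma \ref{quadest} gives a bound by $C' \snorm{w}{L^2} \snorm{w}{L^2_{2m}}$, but $\snorm{w}{L^2_{2m}}$ is too high-order to absorb here; this is where the two different norm estimates in Lemma \ref{quadest} matter, and I would instead pair $Q(u)-Q(v)$ against $w$ using only the crude bound $|\int \inn{Q(u)-Q(v)}{w}| \leq \snorm{Q(u)-Q(v)}{L^2}\snorm{w}{L^2}$ and postpone the high-order estimate to the $\nabla^m w$ computation. Actually the cleanest route: bound $2\int \inn{Q(u)-Q(v)}{w} \leq \snorm{Q(u)-Q(v)}{L^2}^2 + \snorm{w}{L^2}^2$ and apply the second inequality of Lemma \ref{quadest} with a small $\vep$ to get a term $\vep^2 \snorm{\nabla^{2m} w}{L^2}^2 + (C_\vep^2 + 1)\snorm{w}{L^2}^2$; the $\nabla^{2m}$ term is still problematic at this level, so in fact one must be more careful and use that $\snorm{w}{L^2_{2m}}$ is finite (smoothness) but not a priori bounded --- meaning the right approach is to pair against $w$ directly and invoke interpolation to keep orders $\leq m$. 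I will pair $Q(u)-Q(v)$ with $w$ via $\snorm{Q(u)-Q(v)}{L^2}\snorm{w}{L^2}$ and use Lemma \ref{quadest}'s second inequality with $\vep$ chosen so $\vep \snorm{\nabla^{2m}w}{L^2}\snorm{w}{L^2} \leq \vep(\snorm{\nabla^{2m}w}{L^2}^2 + \snorm{w}{L^2}^2)$ --- and here is the genuine subtlety that the $\nabla^{2m}$ term really does need to be controlled, which forces the estimate for $\snorm{\nabla^m w}{L^2}^2$ to be done so that its good $-c\snorm{\nabla^{2m}w}{L^2}^2$ term (again from G{\aa}rding applied to $\hat L$) dominates.

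Concretely, for the second quantity I would compute, using $\dvar{t}(\nabla^m w) = \hat L(\nabla^m w) + \nabla^m(Q(u)-Q(v))$,
\[
\dtime \snorm{\nabla^m w}{L^2}^2 = 2\int_M \inn{\hat L(\nabla^m w)}{\nabla^m w}\, \dvh + 2\int_M \inn{\nabla^m(Q(u)-Q(v))}{\nabla^m w}\, \dvh.
\]
Since $\hat L$ agrees with $L$ at top order it is also strongly elliptic of order $2m$, so G{\aa}rding gives $\int_M \inn{\hat L(\nabla^m w)}{\nabla^m w}\, \dvh \leq -c_1 \snorm{\nabla^m w}{L^2_m}^2 + K_1 \snorm{\nabla^m w}{L^2}^2$, and $\snorm{\nabla^m w}{L^2_m}^2$ controls $\snorm{\nabla^{2m}w}{L^2}^2$ up to lower-order terms. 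For the cross term, I would integrate by parts $m$ times to move derivatives off $\nabla^m(Q(u)-Q(v))$ onto $\nabla^m w$, obtaining $\pm 2\int_M \inn{Q(u)-Q(v)}{\nabla^{2m}w}\, \dvh$ plus curvature commutator terms of order $\leq 2m-1$ in $w$; the principal piece is then bounded by $2\snorm{Q(u)-Q(v)}{L^2}\snorm{\nabla^{2m}w}{L^2}$, and applying the second estimate of Lemma \ref{quadest} with $\vep$ small, followed by Young's inequality, yields $\vep C \snorm{\nabla^{2m}w}{L^2}^2 + C_\vep' \snorm{w}{L^2}^2$. Choosing $\vep$ small enough that $\vep C < c_1/2$ lets the good negative term from G{\aa}rding absorb it, and similarly the $\nabla^{2m}$ piece appearing in the $\dtime\snorm{w}{L^2}^2$ computation gets absorbed into this same negative term.

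Adding the two identities, all the $L^2_m$-type and $L^2_{2m}$-type quantities that appear with positive sign are dominated by the negative G{\aa}rding terms $-c\snorm{w}{L^2_m}^2 - c_1\snorm{\nabla^m w}{L^2_m}^2$ after $\vep$ is fixed small, and what remains on the right is a constant multiple of $\snorm{w}{L^2}^2 + \snorm{\nabla^m w}{L^2}^2$ (using interpolation, $\snorm{w}{L^2_m}^2 \leq \snorm{w}{L^2}^2 + \snorm{\nabla^m w}{L^2}^2$, to reconcile the various norm conventions, plus the fact that intermediate-order terms $\snorm{\nabla^j w}{L^2}$ for $0 < j < 2m$ interpolate between $\snorm{w}{L^2}$ and the top-order term that has been absorbed). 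This gives the stated inequality with $K$ the resulting accumulated constant. I expect the main obstacle to be the bookkeeping in the integration-by-parts step for the cross term in $\dtime\snorm{\nabla^m w}{L^2}^2$: one must verify that all commutator/curvature terms produced by moving the $m$ covariant derivatives are genuinely of order $\leq 2m-1$ in $w$ so that they interpolate harmlessly, and that the single top-order pairing $\int \inn{Q(u)-Q(v)}{\nabla^{2m}w}$ is the only place a $\vep$ is needed --- and then confirming that the single small parameter $\vep$ can be chosen uniformly to beat both G{\aa}rding constants $c$ and $c_1$ simultaneously. Everything else is a standard energy estimate plus Young's and interpolation inequalities.
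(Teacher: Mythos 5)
Your proposal is correct and follows essentially the same route as the paper: differentiate the $L^2$ norms of $w$ and $\nabla^m w$, apply G{\aa}rding's inequality to the $\hat L$ terms to produce good negative terms $-c_0\snorm{\nabla^m w}{L^2}^2$ and $-c_m\snorm{\nabla^{2m}w}{L^2}^2$, integrate by parts to pair $Q(u)-Q(v)$ against $\nabla^{2k}w$, estimate via the second inequality of Lemma \ref{quadest} with a small $\vep$, and choose $\vep$ small enough that the G{\aa}rding term at level $k=m$ absorbs all the $\vep\snorm{\nabla^{2m}w}{L^2}^2$ contributions. The paper organizes the computation for general $0\leq k\leq m$ before specializing to $k=0$ and $k=m$, but the mechanism is identical to yours.
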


\begin{proof}
We have, for any $k$, $0 \leq k \leq m$
\begin{align*}
\dtime \snorm{\nabla^k w}{L^2}^2
		&= \dtime \int_M \bigl\langle \nabla^k w, \nabla^k w \bigl\rangle\, dv \\
		&= 2 \int_M \bigl\langle \dvar{t} \nabla^k w, \nabla^k w \bigl\rangle\, dv \\
		&= 2 \int_M \bigl\langle \hat{L} (\nabla^k w) + \nabla^k\left({Q}(u) - {Q}(v)\right),
													\nabla^k w \bigl\rangle\, dv \\
		&= 2 \int_M \bigl\langle \hat{L} (\nabla^k w), \nabla^k w \bigl\rangle\, dv
				+ 2 \int_M \bigl\langle \nabla^k\left({Q}(u) - {Q}(v)\right), \nabla^k w \bigl\rangle\, dv \\
		&= 2 \int_M \bigl\langle \hat{L} (\nabla^k w), \nabla^k w \bigl\rangle\, dv
				+ 2(-1)^k \int_M \bigl\langle \left({Q}(u) - {Q}(v)\right), \nabla^{2k} w \bigl\rangle\, dv \\
\end{align*}
We apply G{\aa}rding's inequality (Theorem \ref{Garding}) to the first integral above.  In particular since $-\hat{L}$ is strongly elliptic,
\begin{align*}
 2 \int_M \langle -\hat{L} (\nabla^k w), \nabla^k w \rangle\, dv
 	&\geq  c_k \snorm{\nabla^k w}{L^{2}_m}^2 - K_k \snorm{\nabla^k w}{L^2}^2 \\
	& \geq c_k \snorm{\nabla^m \nabla^k w}{L^2}^2 - K_k \snorm{\nabla^k w}{L^2}^2, \end{align*}
and so
\[ 2 \int_M \langle \hat{L} (\nabla^k w), \nabla^k w \rangle\, dv
	\leq  -c_k \snorm{\nabla^m \nabla^k w}{L^2}^2 + K_k \snorm{\nabla^k w}{L^2}^2. \]
For the second integral, we estimate using H\"older's inequality followed by Lemma \ref{quadest} and the interpolation inequalities.  A straightforward estimation shows that for any $\vep > 0$ there exists a $C > 0$ where
\[
\left|2(-1)^k \int_M \langle \left({Q}(u) - {Q}(v)\right), \nabla^{2k} w \rangle\, dv \right|
	\leq C\snorm{w}{L^2}^2 + \vep \snorm{\nabla^{2m}w}{L^2}^2.
\]

In total for $\vep > 0$ there exists a constant $C > 0$ where we obtain
\begin{equation*}
\dtime\snorm{\nabla^k w}{L^2}^2
	\leq - c_k \snorm{\nabla^{m+k} w}{L^2}^2 + K_k \snorm{\nabla^k w}{L^2}^2
			+ C \snorm{w}{L^2}^2 + \vep \snorm{\nabla^{2m} w}{L^2}^2.
\end{equation*}

We now combine this estimate with $k = 0$ and this estimate with $k = m$ to get
\begin{align*}
\dtime\Bigl(\snorm{w}{L^2}^2 + \snorm{\nabla^m w}{L^2}^2\Bigr)
		&\leq - c_0 \snorm{\nabla^m w}{L^2}^2 + K_0 \snorm{w}{L^2}^2
			+ C' \snorm{w}{L^2}^2 + \vep \snorm{\nabla^{2m} w}{L^2}^2\\
		& \quad - c_m \snorm{\nabla^{2m} w}{L^2}^2 + K_m \snorm{\nabla^m w}{L^2}^2
			+ C'' \snorm{w}{L^2}^2 + \vep \snorm{\nabla^{2m} w}{L^2}^2.
\end{align*}
Taking $\vep > 0$ so small that $-c_m + 2 \vep < 0$, we obtain the desired inequality, where $K = \max\{K_0+C' + C'', K_m\}$.
\end{proof}

Theorem \ref{thm:main-adj-flow} now follows immediately from this proposition.
\begin{proof}[Proof of Theorem \ref{thm:main-adj-flow}]
If $g_1$ and $g_2$ are two solutions to \eqref{originalflow-adj} then $w(0) = 0$ and so since $w$ must satisfy the differential inequality of Proposition \ref{differentialinequalityprop}, it must remain equal to zero.
\end{proof}


\section{Map Covariant Derivatives} \label{mcd}

We begin with general remarks concerning map covariant derivatives and then specialize to the setting appropriate for the geometric flows of interest.  While not new, the results here are meant to elucidate the machinery and nuance underlying map covariant derivatives.  For additional development, see \cite{ChowLN,EellsS,HamiltonSingularities}.

Let $\varphi: M \rightarrow N$ be a smooth map.  For any vector bundle $B$ over $N$, define the pullback bundle $\varphi^*(B)$ over $M$ by requiring the fiber over a point $p$ in $M$ to be $B_{\varphi(p)}$.  An important example is the pullback bundle of the tangent bundle $TN$.  With this, we can interpret the differential of $\varphi$ as a section of the bundle $T^*M \otimes \varphi^*TN$.  In coordinates, we have
\[
d\varphi = \frac{\partial \varphi^{\alpha}}{\partial x^i}
			dx^i \otimes \left.\frac{\partial}{\partial y^{\alpha}}\right|_{\varphi}
\]
where here and in the future, the vertical bar indicates composition and Greek indices are used for objects associated with bundles initially over $N$ while Latin indices are used for objects associated with bundles initially over $M$.

Let $M$ be equipped with a metric $\gbar$, and let $N$ be equipped with a metric $h$.  We consider the metric $h \circ \varphi$ on $\varphi^*TN$ and note that this is not $\varphi^*(h)$ which acts on elements of $TM$ instead of $\varphi^*TN$.  Using the Levi-Civita connections of these metrics, we may define a connection and associated covariant derivative called the map covariant derivative on sections of $(T^*M)^{p_1} \otimes (TM)^{q_1} \otimes (\varphi^* T^*N)^{p_2} \otimes (\varphi^* TN)^{q_2}$ which we call $\binom{p_1,p_2}{q_1,q_2}$-tensors, where we account for any part of the tensor coming from bundles originally over $M$ using $\gbar$ and any part of the tensor coming from bundles originally over $N$ using $h$ through composition with $\varphi$.  For example, the tensors of primary interest in this paper are $\binom{p,0}{0,q}$-tensors and for such a tensor $F$ we have
\[
(\nabla^{\gbar,h} F)^{\alpha_1 \cdots \alpha_q}_{i_1 \cdots i_p m}
		= \frac{\partial}{\partial x^m} (F^{\alpha_1 \cdots \alpha_q}_{i_1 \cdots i_p})
			- \sum_{l=1}^p F^{\alpha_1 \cdots \alpha_q}_{i_1 \cdots k \cdots i_p}
				(\Gamma^{\gbar})_{m i_l}^{k}
			+ \sum_{l=1}^q F^{\alpha_1 \cdots \gamma \cdots \alpha_q}_{i_1 \cdots i_p}
						\frac{\partial \varphi^{\mu}}{\partial x^{m}}
							(\Gamma^{h})_{\gamma \mu}^{\alpha_l} \circ \varphi.
\]
Finally, we extend the map covariant derivative to maps $f:M \rightarrow N$ by defining it to be the differential of $f$ and we note that as such, $\nabla^{\gbar,h} f = df$ is a $\binom{1,0}{0,1}$-tensor.

One consequence of the definition is that if a tensor is originally defined over just $M$ or just $N$, then the map covariant derivative of that tensor is just the covariant derivative with respect to the metric corresponding to the appropriate manifold.  As a specific example, this connection is compatible with both $\gbar$ and $h \circ \varphi$:

\begin{lemma}
The following identities hold:
\[
\nabla^{\gbar,h} \gbar = 0\ \ \ \mbox
{and}\ \ \ \ \nabla^{\gbar,h} (h \circ \varphi) = 0.
\]
\end{lemma}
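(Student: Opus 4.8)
The plan is to prove both identities by a direct computation in local coordinates, using only the defining formula for the map covariant derivative together with the metric compatibility of the two Levi-Civita connections $\nabla^{\gbar}$ (on $M$) and $\nabla^{h}$ (on $N$). In fact both statements are just the ``specific example'' of the general remark preceding the lemma --- that the map covariant derivative of a tensor built from a single factor reduces to the ordinary covariant derivative of the relevant metric --- made explicit, so the only work is to unwind the definitions.

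First I would handle $\nabla^{\gbar,h}\gbar$. Since $\gbar$ is a section of $(T^*M)^{\otimes 2}$, i.e. a $\binom{2,0}{0,0}$-tensor, it carries no indices coming from bundles originally over $N$, so the term in the definition involving $\partial\varphi^{\mu}/\partial x^m$ and $(\Gamma^h)\circ\varphi$ does not appear, and the defining formula collapses to
\[
(\nabla^{\gbar,h}\gbar)_{ijm}
	= \frac{\partial}{\partial x^m}\gbar_{ij}
		- \gbar_{kj}(\Gamma^{\gbar})_{mi}^{k}
		- \gbar_{ik}(\Gamma^{\gbar})_{mj}^{k}
	= (\nabla^{\gbar}\gbar)_{ijm},
\]
which vanishes because $\nabla^{\gbar}$ is the Levi-Civita connection of $\gbar$.

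Next I would treat $\nabla^{\gbar,h}(h\circ\varphi)$. Here $h\circ\varphi$ is a $\binom{0,2}{0,0}$-tensor, a section of $(\varphi^*T^*N)^{\otimes 2}$ with components $(h\circ\varphi)_{\alpha\beta}=h_{\alpha\beta}\circ\varphi$; the analogue of the displayed formula for two lower Greek indices carries a $-(\Gamma^h)\circ\varphi$ term for each such index, giving
\[
(\nabla^{\gbar,h}(h\circ\varphi))_{\alpha\beta m}
	= \frac{\partial}{\partial x^m}\bigl(h_{\alpha\beta}\circ\varphi\bigr)
		- (h_{\gamma\beta}\circ\varphi)\,\frac{\partial\varphi^{\mu}}{\partial x^m}(\Gamma^{h})_{\mu\alpha}^{\gamma}\!\circ\varphi
		- (h_{\alpha\gamma}\circ\varphi)\,\frac{\partial\varphi^{\mu}}{\partial x^m}(\Gamma^{h})_{\mu\beta}^{\gamma}\!\circ\varphi .
\]
Applying the chain rule, $\frac{\partial}{\partial x^m}(h_{\alpha\beta}\circ\varphi)=\bigl(\frac{\partial h_{\alpha\beta}}{\partial y^{\mu}}\circ\varphi\bigr)\frac{\partial\varphi^{\mu}}{\partial x^m}$, and factoring out $\partial\varphi^{\mu}/\partial x^m$ yields
\[
(\nabla^{\gbar,h}(h\circ\varphi))_{\alpha\beta m}
	= \frac{\partial\varphi^{\mu}}{\partial x^m}\left(\frac{\partial h_{\alpha\beta}}{\partial y^{\mu}}
		- h_{\gamma\beta}(\Gamma^h)_{\mu\alpha}^{\gamma}
		- h_{\alpha\gamma}(\Gamma^h)_{\mu\beta}^{\gamma}\right)\circ\varphi
	= \frac{\partial\varphi^{\mu}}{\partial x^m}\,(\nabla^{h}h)_{\alpha\beta\mu}\circ\varphi = 0,
\]
since $\nabla^{h}$ is the Levi-Civita connection of $h$.

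I do not expect a real obstacle: the content is entirely index bookkeeping. The only points requiring a little care are to record correctly the sign convention in the map covariant derivative on the mixed bundle $(\varphi^*T^*N)^{\otimes 2}$ (lower Greek indices contribute $-(\Gamma^h)\circ\varphi$ rather than $+(\Gamma^h)\circ\varphi$), and to invoke the chain rule for $\partial_m(h_{\alpha\beta}\circ\varphi)$ at the right moment so that the three terms reorganize into $(\nabla^{h}h)\circ\varphi$. Everything then closes using only metric compatibility of $\nabla^{\gbar}$ and $\nabla^{h}$; in particular no hypothesis on $\varphi$ (such as being a diffeomorphism) is needed.
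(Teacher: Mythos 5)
Your computation is correct and is exactly the definition-unwinding the paper has in mind: it states the lemma as an immediate consequence of the definition of the map covariant derivative and gives no proof, so your coordinate verification (reduction to $\nabla^{\gbar}\gbar=0$ for the $M$-part and the chain-rule regrouping into $(\nabla^h h)\circ\varphi=0$ for the pulled-back part) simply supplies the omitted details. The sign convention you use for lower Greek indices and the observation that no hypothesis on $\varphi$ is needed are both right.
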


Also, when commuting map covariant derivatives, identities involving curvature arise that are similar to those for the usual covariant derivative.  For our work here, we do not need the precise details but we note the following

\begin{prop} \label{mapcovswapprop}
For a tensor $F$ as given above,
\[
\nabla^{\gbar, h}_a \nabla^{\gbar, h}_b F^{\alpha_1 \cdots \alpha_q}_{i_1 \cdots i_p}
	- \nabla^{\gbar, h}_b \nabla^{\gbar, h}_a F^{\alpha_1 \cdots \alpha_q}_{i_1 \cdots i_p}
	= (F \star Rm^{\gbar} + F \star Rm^{h} \circ \varphi)^{\alpha_1 \cdots \alpha_q}_{i_1 \cdots i_p a b}.
\]
\end{prop}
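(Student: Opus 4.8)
The plan is to compute the commutator directly from the coordinate formula for $\nabla^{\gbar,h}$ given above, treating the $M$-part (Latin indices, governed by $\Gamma^{\gbar}$) and the $N$-part (Greek indices, governed by $\Gamma^h \circ \varphi$ together with the factor $\partial\varphi/\partial x$) separately, and then recognizing the resulting terms as being of the schematic form $F \star Rm^{\gbar}$ and $F \star Rm^h\circ\varphi$. First I would reduce to the case of a $\binom{1,0}{0,1}$-tensor (one lower Latin index, one upper Greek index): the general case follows by the Leibniz rule for $\nabla^{\gbar,h}$, since each additional index contributes its own curvature term with the rest of the tensor along for the ride, and no cross terms between distinct index slots survive in a second-order commutator. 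For such an $F^\alpha_i$, expanding $\nabla^{\gbar,h}_a\nabla^{\gbar,h}_b F^\alpha_i$ produces, besides the symmetric second-derivative term $\partial_a\partial_b F^\alpha_i$ which cancels in the antisymmetrization, four types of contributions: (i) terms built purely from $\Gamma^{\gbar}$ and its $x$-derivatives contracted into the lower index of $F$; (ii) terms built purely from $\Gamma^h\circ\varphi$, its derivatives, and factors of $\partial\varphi/\partial x$, contracted into the upper index of $F$; (iii) mixed terms where one Christoffel symbol acts on the Latin slot and the other on the Greek slot; and (iv) terms where $\partial_a$ hits the factor $\partial\varphi^\mu/\partial x^b$, producing a $\partial^2\varphi$ which is symmetric in $a,b$ and so also cancels.

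The key observation is that type (iii) mixed terms are manifestly symmetric under $a\leftrightarrow b$ — each such term has the form $(\text{something in }a)\times(\text{something in }b)$ with the two factors attached to different index slots of $F$ — so they drop out of the commutator entirely. What remains in type (i) assembles, by exactly the classical computation, into $-F^\alpha_k\,(Rm^{\gbar})^k{}_{iab}$, i.e.\ a term of the form $F\star Rm^{\gbar}$; this is just the standard Ricci identity for the Levi-Civita connection of $\gbar$ acting on the Latin index, and the presence of the inert Greek index does not affect it. What remains in type (ii) is where I expect the only real bookkeeping: one must show that the $\partial_a(\Gamma^h)^\alpha_{\gamma\mu}\circ\varphi$ terms combine via the chain rule $\partial_a[(\Gamma^h)^\alpha_{\gamma\mu}\circ\varphi] = (\partial_\nu\Gamma^h)^\alpha_{\gamma\mu}\circ\varphi\cdot\partial\varphi^\nu/\partial x^a$ with the $(\Gamma^h\circ\varphi)(\Gamma^h\circ\varphi)$ terms so that, after antisymmetrizing in $a,b$, the factors of $\partial\varphi/\partial x$ pair off to reproduce $F^\gamma_i\,(Rm^h)^\alpha{}_{\gamma\mu\nu}\circ\varphi\cdot(\partial\varphi^\mu/\partial x^a)(\partial\varphi^\nu/\partial x^b)$, which is of the form $F\star Rm^h\circ\varphi$. (This is precisely the computation that shows the curvature of the pullback connection $\varphi^*\nabla^h$ is the pullback of the curvature of $\nabla^h$.)

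I would organize the write-up as: (1) state the reduction to a single index of each type via Leibniz; (2) expand the double map covariant derivative in coordinates and strike the symmetric terms ($\partial^2 F$, $\partial^2\varphi$, mixed Latin–Greek products); (3) collect the pure-Latin terms and cite/redo the standard Ricci identity to get the $F\star Rm^{\gbar}$ piece; (4) collect the pure-Greek terms, apply the chain rule to the derivative of $\Gamma^h\circ\varphi$, and identify the $F\star Rm^h\circ\varphi$ piece. The main obstacle is step (4): keeping the indices and the $\partial\varphi/\partial x$ factors straight while verifying that the antisymmetrization in $(a,b)$ is exactly what converts the raw Christoffel expression into the Riemann tensor of $h$ evaluated along $\varphi$. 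Since the proposition only asserts the schematic form $F\star Rm^{\gbar} + F\star Rm^h\circ\varphi$ (with the precise linear combination of contractions left unspecified), I do not need to track the exact coefficients or index placements — it suffices to exhibit that every surviving term is a contraction of $F$ with one copy of either curvature tensor, which makes the argument considerably lighter than a full Ricci-identity derivation.
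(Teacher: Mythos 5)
Your proposal is correct, and in fact the paper offers no proof of this proposition at all --- it explicitly says the precise details are not needed and simply records the schematic identity, deferring to the references on map covariant derivatives. Your direct coordinate computation is the standard argument and it goes through: the second-derivative term $\partial_a\partial_b F$, the $\Gamma^{\gbar}{}^k_{ab}$ term contracting the new derivative index, the $\partial_a\partial_b\varphi$ term, and the Latin--Greek cross terms are all symmetric under $a\leftrightarrow b$ (the cross terms pairwise, by swapping which slot carries $a$ and which carries $b$), so only the pure-$\gbar$ Ricci-identity piece and the pure-$h$ pullback-curvature piece survive. One small bookkeeping point worth making explicit in the write-up: the second surviving piece is $F^{\gamma}_{i}\,(Rm^h)^{\alpha}{}_{\gamma\mu\nu}\circ\varphi\;\frac{\partial\varphi^{\mu}}{\partial x^{a}}\frac{\partial\varphi^{\nu}}{\partial x^{b}}$, i.e.\ it necessarily carries two factors of $d\varphi$ to convert the Greek curvature indices into the Latin indices $a,b$ appearing on the left-hand side; the paper's notation $F\star Rm^h\circ\varphi$ silently absorbs these factors into the $\star$, and the paper's own use of the identity (in the proof of Proposition \ref{vectorfieldpullback-prop}) is consistent with that reading, so your looser tracking of "every surviving term is a contraction of $F$ with one copy of either curvature" is adequate provided you flag the $d\varphi$ factors.
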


Finally, we define the map Laplacian by
\[
\Delta^{\gbar, h} F = \tr^{\gbar} \left[ \nabla^{\gbar, h} \nabla^{\gbar, h} F  \right]
\]
where the trace is taken over the two indices corresponding to the map covariant derivatives.

\subsection{When the map is the identity}

Consider a special case of the above: suppose  $M = N$, and $\varphi = \id$.  In this setting, we have one manifold equipped with two metrics $g$ and $h$, and when taking a map covariant derivative it must be made clear which metric connection is acting on which part of the tensor.  Note that we do not use $\gbar$ here.  We will find that it is most natural to reserve this for the pullback of $g$ under another map.  For the application of interest, we are using $g$ for the contravariant part of the tensor and $h$ for the covariant part.  If we work in one coordinate patch $\{y^{\beta}\}$ then all the derivatives of the (identity) map simplify and we have

\[
(\nabla^{g,h} F)^{\alpha_1 \cdots \alpha_q}_{\beta_1 \cdots \beta_p \mu}
		= \frac{\partial}{\partial y^{\mu}} (F^{\alpha_1 \cdots \alpha_q}_{\beta_1 \cdots \beta_p})
			- \sum_{l=1}^p F^{\alpha_1 \cdots \alpha_q}_{\beta_1 \cdots \gamma \cdots \beta_p}
							(\Gamma^g)_{\mu \beta_l}^{\gamma}
			+ \sum_{l=1}^q F^{\alpha_1 \cdots \gamma \cdots \alpha_q}_{\beta_1 \cdots \beta_p}
							(\Gamma^h)_{\gamma \mu}^{\alpha_l}.
\]
Since there is no longer an explicit map involved, we call this the ``mixed covariant derivative.''  In this setting, we can ask how this mixed covariant derivative compares to the usual covariant derivative.  To answer this, we introduce the $\binom{2}{1}$-difference tensor
\[
A^{g,h} = \Gamma^g - \Gamma^h
\]
and we have the following 
\begin{prop} \label{puremixedcovprop}
Let $F$ be a $\binom{p}{q}$-tensor on $N$ and let $g$ and $h$ be two metrics on $N$.  Let $A^{g,h}$ be the difference tensor for $g$ and $h$.  Then
\[
(\nabla^{g} F)^{\alpha_1 \cdots \alpha_q}_{\beta_1 \cdots \beta_p \mu}
	= (\nabla^{g,h} F)^{\alpha_1 \cdots \alpha_q}_{\beta_1 \cdots \beta_p \mu}
		+ \sum_{l=1}^q F^{\alpha_1 \cdots \gamma \cdots \alpha_q}_{\beta_1 \cdots \beta_p}
				(A^{g,h})_{\gamma \mu}^{\alpha_l}
\]
or, masking the structure of the lower order terms, we have
\[
\nabla^{g} F = \nabla^{g,h} F + F \star A^{g,h}.
\]
\end{prop}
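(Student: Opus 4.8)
The plan is to prove Proposition \ref{puremixedcovprop} by a direct computation in local coordinates, comparing the coordinate formulas for $\nabla^g F$ and $\nabla^{g,h}F$ term by term. Since the stated identity is local and tensorial, it suffices to verify it in an arbitrary coordinate patch $\{y^\beta\}$, where both covariant derivatives have explicit expressions in terms of Christoffel symbols. For $\nabla^g F$ we have the standard formula in which \emph{every} index of $F$ is corrected using $\Gamma^g$: the $p$ lower indices each contribute a term $-F_{\cdots\gamma\cdots}(\Gamma^g)^\gamma_{\mu\beta_l}$ and the $q$ upper indices each contribute a term $+F^{\cdots\gamma\cdots}(\Gamma^g)^{\alpha_l}_{\gamma\mu}$. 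For $\nabla^{g,h}F$, by the definition of the mixed covariant derivative recorded just above the proposition, the $p$ lower indices are corrected with $\Gamma^g$ exactly as before, but the $q$ upper indices are corrected with $\Gamma^h$ instead, contributing $+F^{\cdots\gamma\cdots}(\Gamma^h)^{\alpha_l}_{\gamma\mu}$.

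First I would write both formulas side by side, observe that the partial-derivative term and all $p$ lower-index correction terms are literally identical in the two expressions, and then subtract. The difference is concentrated entirely in the upper indices:
\[
(\nabla^g F)^{\alpha_1\cdots\alpha_q}_{\beta_1\cdots\beta_p\mu}
 - (\nabla^{g,h}F)^{\alpha_1\cdots\alpha_q}_{\beta_1\cdots\beta_p\mu}
 = \sum_{l=1}^q F^{\alpha_1\cdots\gamma\cdots\alpha_q}_{\beta_1\cdots\beta_p}
 \bigl[(\Gamma^g)^{\alpha_l}_{\gamma\mu} - (\Gamma^h)^{\alpha_l}_{\gamma\mu}\bigr].
\]
Then I would invoke the definition $A^{g,h} = \Gamma^g - \Gamma^h$ to replace the bracketed quantity by $(A^{g,h})^{\alpha_l}_{\gamma\mu}$, which immediately yields the displayed identity in the statement. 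The second, structural form $\nabla^g F = \nabla^{g,h}F + F\star A^{g,h}$ is then just the observation that the sum over $l$ of contractions of $F$ with $A^{g,h}$ is, by definition of the $\star$ notation introduced in the background section, precisely an instance of $F\star A^{g,h}$.

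The only point requiring a word of care — and the closest thing to an obstacle — is the justification that $A^{g,h} = \Gamma^g - \Gamma^h$ is genuinely a $\binom{2}{1}$-tensor, so that the coordinate computation above actually defines a coordinate-independent object and the identity is invariant. This follows from the standard fact that the difference of two connections is tensorial: the inhomogeneous (non-tensorial) terms in the transformation laws of $\Gamma^g$ and $\Gamma^h$ under a change of coordinates are identical and cancel in the difference. Granting this, the left-hand side $\nabla^g F$ and the first term $\nabla^{g,h}F$ on the right are each tensors by construction, and the computation shows their difference equals the tensor $\sum_l F^{\cdots\gamma\cdots}(A^{g,h})^{\alpha_l}_{\gamma\mu}$, so the identity holds in every coordinate system once it holds in one. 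No estimates or analytic input are needed; this is purely an exercise in bookkeeping with Christoffel symbols, and the proof is short.
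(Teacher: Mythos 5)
Your proof is correct and matches the paper's approach exactly: the paper's entire proof is ``Compare the coordinate formulas for the two different covariant derivatives,'' which is precisely the term-by-term comparison you carry out. Your added remark on the tensoriality of $A^{g,h}$ is a sensible extra bookkeeping check but not a departure from the paper's argument.
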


\begin{proof}
Compare the coordinate formulas for the two different covariant derivatives.
\end{proof}

As an example, we can apply Proposition \ref{puremixedcovprop} to the $\binom{1}{1}$-tensor $d\, \id$.

\begin{lemma} \label{mixedcovofdidlemma}
The following identity holds
\[
\nabla^{g,h} d\, \id = -A^{g,h}.
\]
\end{lemma}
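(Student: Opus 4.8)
The plan is to apply Proposition \ref{puremixedcovprop} directly to the $\binom{1}{1}$-tensor $F = d\,\id$ and then simplify using the explicit coordinate description. In a single coordinate patch $\{y^\beta\}$, the components of $d\,\id$ are just $\delta^\alpha_\beta$, so $(d\,\id)^\alpha_\beta = \delta^\alpha_\beta$ is locally constant and all of its ordinary coordinate derivatives vanish.

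First I would recall from the section ``When the map is the identity'' that for the identity map the map covariant derivative becomes the mixed covariant derivative $\nabla^{g,h}$, so that $\nabla^{g,h} d\,\id = \nabla^{g,h}(d\,\id)$ computed via the mixed-covariant-derivative coordinate formula. Applying that formula to $F = d\,\id$ with $p = q = 1$, the derivative term $\partial_\mu \delta^\alpha_\beta = 0$ drops out, leaving $(\nabla^{g,h} d\,\id)^\alpha_{\beta\mu} = -\delta^\alpha_\gamma (\Gamma^h)^\gamma_{\mu\beta} + \delta^\gamma_\beta(\Gamma^h)^\alpha_{\gamma\mu} = -(\Gamma^h)^\alpha_{\mu\beta} + (\Gamma^h)^\alpha_{\beta\mu} = 0$ by the symmetry of the Levi-Civita Christoffel symbols. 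This already shows $\nabla^{g,h}d\,\id$ vanishes, but I want to re-express it through the identity in Proposition \ref{puremixedcovprop}.

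So instead I would run Proposition \ref{puremixedcovprop} in the form $\nabla^g F = \nabla^{g,h} F + F \star A^{g,h}$ with $F = d\,\id$. The left-hand side $\nabla^g d\,\id$ is the usual covariant derivative of the identity endomorphism with respect to a single connection, which is zero (equivalently: $d\,\id = \mathrm{id}_{TN}$ is parallel for every metric connection, or one checks directly that $\partial_\mu \delta^\alpha_\beta - \delta^\alpha_\gamma(\Gamma^g)^\gamma_{\mu\beta} + \delta^\gamma_\beta (\Gamma^g)^\alpha_{\gamma\mu} = 0$). The explicit contraction term is $\sum_{l=1}^1 (d\,\id)^\gamma_\beta (A^{g,h})^\alpha_{\gamma\mu} = \delta^\gamma_\beta (A^{g,h})^\alpha_{\gamma\mu} = (A^{g,h})^\alpha_{\beta\mu}$, i.e.\ exactly $A^{g,h}$ itself under the schematic $F \star A^{g,h}$ notation. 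Rearranging gives $\nabla^{g,h} d\,\id = \nabla^g d\,\id - A^{g,h} = -A^{g,h}$, as claimed.

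There is no real obstacle here; the only mild subtlety is bookkeeping the index placement so that the single contraction $(d\,\id)\star A^{g,h}$ genuinely collapses to $A^{g,h}$ rather than to some trace or transpose of it, and confirming that the relevant contravariant slot of $d\,\id$ is the one carrying the $h$-connection in the mixed covariant derivative (consistent with the convention fixed just above, ``$g$ for the contravariant part and $h$ for the covariant part,'' here applied with the roles matching the $\binom{1}{1}$ structure of $d\,\id$ as a section of $T^*N \otimes \varphi^*TN$). Once the conventions are lined up, the computation is the two-line simplification above, so I would present it compactly, citing Proposition \ref{puremixedcovprop} and the vanishing of $\nabla^g d\,\id$.
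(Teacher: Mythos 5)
Your second paragraph is exactly the paper's proof: apply Proposition \ref{puremixedcovprop} to $F = d\,\id$, use $\nabla^g d\,\id = 0$ and $(d\,\id)^{\alpha}_{\beta} = \delta^{\alpha}_{\beta}$ so that the correction term $F \star A^{g,h}$ collapses to $A^{g,h}$ itself, and rearrange. That part is correct and needs no change.

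However, the ``direct computation'' in your first paragraph is wrong, and it proves a statement that contradicts the lemma you are trying to establish. You apply the mixed covariant derivative formula with $(\Gamma^h)$ acting on \emph{both} slots of $d\,\id$, obtaining
$-\delta^\alpha_\gamma (\Gamma^h)^\gamma_{\mu\beta} + \delta^\gamma_\beta(\Gamma^h)^\alpha_{\gamma\mu} = 0$
and conclude that $\nabla^{g,h} d\,\id$ vanishes. But the whole point of the mixed covariant derivative is that the two slots see \emph{different} connections: in the paper's convention the covariant (lower) index is differentiated with $\Gamma^g$ and the contravariant (upper) index with $\Gamma^h$. The correct computation is
\begin{equation*}
(\nabla^{g,h} d\,\id)^{\alpha}_{\beta\mu}
  = \partial_\mu \delta^\alpha_\beta
    - \delta^{\alpha}_{\gamma}\,(\Gamma^{g})^{\gamma}_{\mu\beta}
    + \delta^{\gamma}_{\beta}\,(\Gamma^{h})^{\alpha}_{\gamma\mu}
  = -(\Gamma^{g})^{\alpha}_{\mu\beta} + (\Gamma^{h})^{\alpha}_{\beta\mu}
  = -(A^{g,h})^{\alpha}_{\beta\mu},
\end{equation*}
using the symmetry of the Christoffel symbols in their lower indices. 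Had your $\nabla^{g,h} d\,\id = 0$ claim been right, the lemma would assert $A^{g,h} = 0$, i.e.\ that $g$ and $h$ always share a Levi-Civita connection, which is false. So delete or correct that paragraph; note that, correctly done, the direct coordinate computation above is itself a complete (and even shorter) proof, equivalent to the route through Proposition \ref{puremixedcovprop}.
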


\begin{proof}
This follows directly from Proposition \ref{puremixedcovprop} and the facts that $\nabla^{g} d\, \id = 0$ and $(d\, \id)^{\alpha}_{\beta} = \delta^{\alpha}_{\beta}$.
\end{proof}

We can generalize the previous result to account for higher derivatives.  In doing so, it will be useful to express all lower order derivatives in terms of the mixed covariant derivative:
\begin{prop} \label{mixedvspure-prop}
Given $F$ as above, taking $k$ derivatives, we have
\[
(\nabla^{g})^k F = (\nabla^{g,h})^k F
					+ \sP((\nabla^{g,h})^{k-1} F, (\nabla^{g,h})^{k-1} A^{g,h}).
\]
\end{prop}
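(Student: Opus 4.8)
The plan is to prove Proposition~\ref{mixedvspure-prop} by induction on $k$. The base case $k=1$ is precisely Proposition~\ref{puremixedcovprop} (masked form), which gives $\nabla^g F = \nabla^{g,h} F + F \star A^{g,h}$; this already matches the claimed structure since $\sP$ of the lower-order arguments $((\nabla^{g,h})^0 F, (\nabla^{g,h})^0 A^{g,h}) = \sP(F, A^{g,h})$ just denotes a polynomial of contractions in $F$ and $A^{g,h}$, which $F \star A^{g,h}$ is an instance of. (One should remark that the case $k=0$ is the trivial identity, if we want to start the induction there.)

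For the inductive step, assume the formula holds for some $k \geq 1$, so that $(\nabla^g)^k F = (\nabla^{g,h})^k F + \sP((\nabla^{g,h})^{k-1} F, (\nabla^{g,h})^{k-1} A^{g,h})$. First I would apply $\nabla^g$ to both sides. On the left this gives $(\nabla^g)^{k+1} F$. On the right, I apply Proposition~\ref{puremixedcovprop} to each term: since $(\nabla^{g,h})^k F$ is itself a tensor of the appropriate mixed type (with the map-covariant-derivative indices being honest lower indices handled by $h$ — here using that for the identity map the mixed covariant derivative of a $\binom{p}{q}$-tensor is a $\binom{p}{q+1}$-tensor in the sense relevant to the proposition, or more carefully noting that Proposition~\ref{puremixedcovprop} applies index-by-index so the extra derivative slots only contribute further $F \star A^{g,h}$-type terms), Proposition~\ref{puremixedcovprop} converts $\nabla^g (\nabla^{g,h})^k F$ into $(\nabla^{g,h})^{k+1} F + (\nabla^{g,h})^k F \star A^{g,h}$.

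It then remains to handle $\nabla^g \sP((\nabla^{g,h})^{k-1} F, (\nabla^{g,h})^{k-1} A^{g,h})$. Here I would argue that $\nabla^g$ acting on a polynomial expression obeys a Leibniz rule: differentiating a contraction of factors $(\nabla^{g,h})^{j} F$ and $(\nabla^{g,h})^{j} A^{g,h}$ with $j \leq k-1$ produces a sum of terms in which one factor is replaced by its $\nabla^g$-derivative. Converting each such $\nabla^g$-derivative back to mixed form via Proposition~\ref{puremixedcovprop} raises one mixed-derivative order to at most $k$ and possibly spawns an extra $\star A^{g,h}$ factor of mixed-order $0$; crucially, every factor that appears has mixed-derivative order at most $k$, so the whole expression is of the form $\sP((\nabla^{g,h})^{k} F, (\nabla^{g,h})^{k} A^{g,h})$. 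Combining with the term $(\nabla^{g,h})^k F \star A^{g,h}$ from the previous paragraph (which is also absorbed into this $\sP$), we obtain $(\nabla^g)^{k+1} F = (\nabla^{g,h})^{k+1} F + \sP((\nabla^{g,h})^{k} F, (\nabla^{g,h})^{k} A^{g,h})$, completing the induction.

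The main obstacle is bookkeeping rather than conceptual: one must be careful that applying $\nabla^g$ to the mixed derivative $(\nabla^{g,h})^k F$ really is covered by Proposition~\ref{puremixedcovprop}, since that proposition as stated is about a genuine $\binom{p}{q}$-tensor on $N$, whereas $(\nabla^{g,h})^k F$ carries $k$ additional lower indices that were produced by the mixed derivative. The resolution is that Proposition~\ref{puremixedcovprop} is really a statement applied slotwise to the upper indices (only upper indices acquire an $A^{g,h}$ correction), so it extends verbatim to tensors with extra lower slots; one should state this observation explicitly. The other point requiring a word of justification is that $\nabla^g$ satisfies the Leibniz rule over $\star$-contractions — this is standard since $\nabla^g$ is a connection and contraction commutes with $\nabla^g$ — so the notation $\sP$ is stable under $\nabla^g$ up to converting the resulting $\nabla^g$ factors to mixed form.
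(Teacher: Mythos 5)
Your induction is correct and is exactly the argument the paper intends (the paper in fact states Proposition~\ref{mixedvspure-prop} without proof, treating it as an immediate iteration of Proposition~\ref{puremixedcovprop}). You also correctly identify and resolve the one real bookkeeping point — that only the contravariant slots pick up $A^{g,h}$ corrections, so the extra lower indices created by repeated differentiation cause no new terms and Proposition~\ref{puremixedcovprop} applies verbatim to $(\nabla^{g,h})^k F$ viewed as a $\binom{p+k}{q}$-tensor.
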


\medskip

\subsection{Pulling back} \label{pullingbacksection}
Consider the composition of maps
\[
(M, \gbar) \overarrow{\varphi} (N, g) \overarrow{\id} (N,h)
\]
with $\gbar = \varphi^* g$.  For a $\binom{p}{q}$-tensor $F$ on $N$ define the pullback tensor $\varphi^*F$ as a $\binom{p,0}{0,q}$-tensor by
\[
\varphi^*F = (\varphi^*F)^{\alpha_1 \cdots \alpha_q}_{i_1 \cdots i_p}
			dx^{i_1} \otimes \cdots \otimes dx^{i_p} \otimes
		\left.\frac{\partial}{\partial y^{\alpha_1}}\right|_{\varphi}
			\otimes \cdots \otimes
				\left.\frac{\partial}{\partial y^{\alpha_q}}\right|_{\varphi}
\]
where
\[
(\varphi^*F)^{\alpha_1 \cdots \alpha_q}_{i_1 \cdots i_p}
	= (F^{\alpha_1 \cdots \alpha_q}_{\beta_1 \cdots \beta_p} \circ \varphi)
		\frac{\partial \varphi^{\beta_1}}{\partial x^{i_1}}
			\cdots \frac{\partial \varphi^{\beta_p}}{\partial x^{i_p}}.
\]
To summarize we are pulling the covariant part of $F$ back by the usual pullback rules, but for the contravariant part we are simply composing with $\varphi$.

As a specific example that proves useful later, since ${d\, \id }$ is a $\binom{1}{1}$-tensor, $\varphi^*(d \, \id)$ is a $\binom{1,0}{0,1}$-tensor, and we have the following:
\begin{lemma} \label{pullbackofdidlemma}
The following identity holds:
\[
\varphi^*(d \, \id) = d \varphi.
\]
\end{lemma}
\begin{proof}
This is a direct computation:
\begin{align*}
[\varphi^*(d \, \id)]_i^{\alpha} &= \bigl((d\, \id)_{\beta}^{\alpha} \circ \varphi \bigr)
						\frac{\partial \varphi^{\beta}}{\partial x^i} \\
					&= (\delta_{\beta}^{\alpha} \circ \varphi)
						\frac{\partial \varphi^{\beta}}{\partial x^i} \\
					&= \frac{\partial \varphi^{\alpha}}{\partial x^i} \\
					&= (d \varphi)_i^{\alpha}.
\end{align*}
\end{proof}

Note that, with the definition of pullback given, $(\gbar)^{-1} = (\varphi^*g)^{-1}$ is a $\binom{0,0}{2,0}$-tensor on $M$, while $\varphi^*(g^{-1}) = (g^{-1}) \circ \varphi$ is a $\binom{0,0}{0,2}$-tensor on $M$ and so they are not equal to each other.  As such, when performing a metric contraction of tensors, some care must be taken to ensure that the appropriate metric is being used for the task.  The following proposition shows that contraction, appropriately interpreted, is natural with respect to this pullback.
\begin{prop} \label{pullbacktrace-prop}
Let $F$ be a $\binom{p}{q}$-tensor on $N$, and let $\varphi^*F$ be the pullback tensor on $M$ defined as above.  Then contracting on any pair of lower indices
\[
\tr^{\gbar} \varphi^*F = \varphi^*(\tr^g F)
\]
and contracting on any pair of upper indices
\[
\tr^{h \circ \varphi} \varphi^*F = \varphi^*(\tr^h F)
\]
\end{prop}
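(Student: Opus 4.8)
The plan is to verify both identities by a direct computation in local coordinates $\{x^i\}$ on $M$ and $\{y^\alpha\}$ on $N$, using only the coordinate formula for $\varphi^*F$ recorded above together with the chain rule applied to the components of $\varphi$ and its inverse.

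I would dispatch the upper-index statement first, since it is the easier of the two: the contravariant slots of $\varphi^*F$ are obtained by mere composition with $\varphi$, and $(h\circ\varphi)_{\alpha\beta} = h_{\alpha\beta}\circ\varphi$, so contracting two upper indices of $\varphi^*F$ against $h\circ\varphi$ just amounts to pulling the composition with $\varphi$ outside the pointwise contraction $h_{\alpha_1\alpha_2}F^{\alpha_1\alpha_2\cdots}$. One line of index manipulation then yields $\tr^{h\circ\varphi}\varphi^*F = \varphi^*(\tr^h F)$, using nothing beyond the definitions.

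For the lower-index statement the one extra ingredient needed is the coordinate expression for the inverse of $\gbar = \varphi^*g$. Writing $\psi = \varphi^{-1}$ (a local diffeomorphism, since $\gbar = \varphi^*g$ is nondegenerate), differentiating $\varphi\circ\psi = \id_N$ gives the identity $\frac{\partial\varphi^\beta}{\partial x^j}\,\bigl(\tfrac{\partial\psi^j}{\partial y^\gamma}\circ\varphi\bigr) = \delta^\beta_\gamma$ (and similarly $\bigl(\tfrac{\partial\psi^i}{\partial y^\beta}\circ\varphi\bigr)\tfrac{\partial\varphi^\beta}{\partial x^j} = \delta^i_j$ from $\psi\circ\varphi = \id_M$), and from these one checks directly that $\gbar^{ij} = (g^{\mu\nu}\circ\varphi)\bigl(\tfrac{\partial\psi^i}{\partial y^\mu}\circ\varphi\bigr)\bigl(\tfrac{\partial\psi^j}{\partial y^\nu}\circ\varphi\bigr)$ inverts $\gbar_{jk} = (g_{\beta\gamma}\circ\varphi)\tfrac{\partial\varphi^\beta}{\partial x^j}\tfrac{\partial\varphi^\gamma}{\partial x^k}$. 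Substituting this into $\gbar^{i_1 i_2}(\varphi^*F)^{\alpha_1\cdots\alpha_q}_{i_1 i_2 i_3\cdots i_p}$, the factors $\tfrac{\partial\psi^{i_1}}{\partial y^\mu}\circ\varphi$ and $\tfrac{\partial\psi^{i_2}}{\partial y^\nu}\circ\varphi$ contract against the Jacobian factors $\tfrac{\partial\varphi^{\beta_1}}{\partial x^{i_1}}$ and $\tfrac{\partial\varphi^{\beta_2}}{\partial x^{i_2}}$ to produce Kronecker deltas, collapsing $(g^{\mu\nu}\circ\varphi)$ to $(g^{\beta_1\beta_2}\circ\varphi)$; the remaining Jacobian factors $\tfrac{\partial\varphi^{\beta_3}}{\partial x^{i_3}}\cdots\tfrac{\partial\varphi^{\beta_p}}{\partial x^{i_p}}$ are exactly those appearing in the definition of $\varphi^*(\tr^g F)$, and the identity $\tr^{\gbar}\varphi^*F = \varphi^*(\tr^g F)$ follows.

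The computations are routine; the only place where care is required is the inverse-metric identity and keeping straight which objects are genuinely pulled back and which are merely composed with $\varphi$ — that is, distinguishing $(\varphi^*g)^{-1}$ from $g^{-1}\circ\varphi$, precisely the distinction emphasized just before the statement of the proposition. I expect this bookkeeping to be the main (and essentially only) obstacle.
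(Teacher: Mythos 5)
Your proposal is correct and follows essentially the same route as the paper's proof: a direct coordinate computation in which the key step for the lower-index case is the identity $\gbar^{jk}\,\tfrac{\partial\varphi^{\mu}}{\partial x^j}\tfrac{\partial\varphi^{\nu}}{\partial x^k} = g^{\mu\nu}\circ\varphi$ (which you justify via the inverse map), while the upper-index case reduces to pulling the composition with $\varphi$ outside the contraction. The paper likewise treats the lower-index contraction in detail and dismisses the upper-index one as "similar but simpler."
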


\begin{proof}
These are just computations, but it is worth noting how the contraction is transferred between $M$ and $N$.  For the first identity, letting $p = r+2$ and contracting without loss of generality on the first two indices, we have
\begin{align*}
(\tr^{\gbar} \varphi^*F)^{\alpha_1 \cdots \alpha_q}_{i_1 \cdots i_r}
		&= \gbar^{jk} (\varphi^*F)^{\alpha_1 \cdots \alpha_q}_{j k i_1 \cdots i_r} \\
		&= \gbar^{jk} (F^{\alpha_1 \cdots \alpha_q}_{\mu \nu \beta_1 \cdots \beta_r} \circ \varphi)
					\frac{\partial \varphi^{\mu}}{\partial x^j}
					\frac{\partial \varphi^{\nu}}{\partial x^k} 
					\frac{\partial \varphi^{\beta_1}}{\partial x^{i_1}}
						\cdots \frac{\partial \varphi^{\beta_r}}{\partial x^{i_r}} \\
		&= [(g^{\mu \nu}
				F^{\alpha_1 \cdots \alpha_q}_{\mu \nu \beta_1 \cdots \beta_r}) \circ \varphi]
					\frac{\partial \varphi^{\beta_1}}{\partial x^{i_1}}
						\cdots \frac{\partial \varphi^{\beta_r}}{\partial x^{i_r}} \\
		&= [(\tr^g F)^{\alpha_1 \cdots \alpha_q}_{\beta_1 \cdots \beta_r} \circ \varphi]
					\frac{\partial \varphi^{\beta_1}}{\partial x^{i_1}}
						\cdots \frac{\partial \varphi^{\beta_r}}{\partial x^{i_r}} \\
		&= \varphi^*(\tr^g F)^{\alpha_1 \cdots \alpha_q}_{i_1 \cdots i_r}.
\end{align*}

The second identity is similar but simpler because the metric performing the contraction is only adjusted by a composition.
\end{proof}

We may also compute the map covariant derivative of the tensor $\varphi^*F$ and compare it to the mixed covariant derivative of $F$.  The following result shows that the definition for the pullback given above ensures the naturality of the map covariant derivative.
\begin{prop} \label{pullbackcov-prop}
Let $F$ be a $\binom{p}{q}$-tensor on $N$, and let $\varphi^*F$ be the pullback tensor defined as above.  Then
\[
(\nabla^{\gbar,h} \varphi^*F)^{\alpha_1 \cdots \alpha_q}_{i_1 \cdots i_p m}
		= \bigl((\nabla^{g, h} F)^{\alpha_1 \cdots \alpha_q}_{\beta_1 \cdots \beta_p \mu}
				\circ \varphi \bigr)
			\frac{\partial \varphi^{\beta_1}}{\partial x^{i_1}}
				\cdots \frac{\partial \varphi^{\beta_p}}{\partial x^{i_p}}
					\frac{\partial \varphi^{\mu}}{\partial x^{m}}
\]
or without indices
\[
\nabla^{\gbar, h} (\varphi^* F) = \nabla^{\varphi^*(g), h} (\varphi^* F) = \varphi^* (\nabla^{g, h} F)
\]
\end{prop}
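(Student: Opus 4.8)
The plan is to verify the first equality $\nabla^{\gbar,h}(\varphi^*F) = \nabla^{\varphi^*(g),h}(\varphi^*F)$ by unwinding definitions, and then to prove the substantive identity $\nabla^{\varphi^*(g),h}(\varphi^*F) = \varphi^*(\nabla^{g,h}F)$ by a direct coordinate computation using the chain rule. The first equality is essentially immediate: by hypothesis $\gbar = \varphi^*g$, so $\nabla^{\gbar,h}$ and $\nabla^{\varphi^*(g),h}$ are literally the same map covariant derivative, and the only thing to remark is that the covariant part of $\varphi^*F$ lives over $M$ and is differentiated using the Christoffel symbols of $\gbar$, which is exactly what $\nabla^{\varphi^*g,h}$ prescribes. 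So the content is in the second equality.

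For the second equality, I would start from the coordinate formula for the map covariant derivative of a $\binom{p,0}{0,q}$-tensor given in Section \ref{mcd}, applied to $\varphi^*F$:
\[
(\nabla^{\gbar,h}\varphi^*F)^{\alpha_1\cdots\alpha_q}_{i_1\cdots i_p m}
 = \partial_{x^m}\bigl((\varphi^*F)^{\alpha_1\cdots\alpha_q}_{i_1\cdots i_p}\bigr)
 - \sum_l (\varphi^*F)^{\cdots k\cdots}_{\cdots}(\Gamma^{\gbar})^{k}_{m i_l}
 + \sum_l (\varphi^*F)^{\cdots\gamma\cdots}_{\cdots}\,\tfrac{\partial\varphi^{\mu}}{\partial x^m}(\Gamma^h)^{\alpha_l}_{\gamma\mu}\circ\varphi,
\]
and substitute the definition of $\varphi^*F$. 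The first term requires the chain rule $\partial_{x^m}\bigl(F^{\cdots}_{\beta_1\cdots\beta_p}\circ\varphi\bigr) = (\partial_{y^{\mu}}F^{\cdots}_{\beta_1\cdots\beta_p}\circ\varphi)\frac{\partial\varphi^{\mu}}{\partial x^m}$ together with the product rule applied to the factors $\frac{\partial\varphi^{\beta_l}}{\partial x^{i_l}}$, which produces second-derivative terms $\frac{\partial^2\varphi^{\beta_l}}{\partial x^m\partial x^{i_l}}$. The key cancellation is that these second-derivative terms combine with the $\Gamma^{\gbar}$ term: since $\gbar = \varphi^*g$ has Christoffel symbols satisfying the standard transformation law $\frac{\partial^2\varphi^{\gamma}}{\partial x^m\partial x^{i_l}} = (\Gamma^{\gbar})^{k}_{m i_l}\frac{\partial\varphi^{\gamma}}{\partial x^k} - \bigl((\Gamma^g)^{\gamma}_{\mu\nu}\circ\varphi\bigr)\frac{\partial\varphi^{\mu}}{\partial x^m}\frac{\partial\varphi^{\nu}}{\partial x^{i_l}}$, the unwanted terms telescope and what survives is precisely $\bigl((\nabla^{g,h}F)^{\cdots}_{\beta_1\cdots\beta_p\mu}\circ\varphi\bigr)\prod_l\frac{\partial\varphi^{\beta_l}}{\partial x^{i_l}}\cdot\frac{\partial\varphi^{\mu}}{\partial x^m}$, with the $\Gamma^g$ part reconstructing the lower-index Christoffel terms of $\nabla^{g,h}F$ and the $\Gamma^h$ part carrying over untouched to reconstruct the upper-index terms.

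The main obstacle — really the only delicate point — is organizing this bookkeeping cleanly: one must be careful that the transformation law for $\Gamma^{\gbar}$ is exactly the one implied by $\gbar = \varphi^*g$ (valid because $\varphi$ is a diffeomorphism, or at least a local diffeomorphism where the formula is being applied), and that the index $\mu$ contracted against $\frac{\partial\varphi^{\mu}}{\partial x^m}$ in the final line matches the slot that $\nabla^{g,h}$ differentiates. An alternative, essentially equivalent route is to invoke naturality abstractly: the map covariant derivative on pullback bundles is defined precisely so that it commutes with pullback of sections, so one could cite this as the defining property. But since the paper's stated goal is to be self-contained and to ``elucidate the machinery,'' the coordinate computation is the appropriate proof, and it is routine once the Christoffel transformation law is in hand. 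I would present it as a short displayed calculation mirroring the style of the proof of Proposition \ref{pullbacktrace-prop}.
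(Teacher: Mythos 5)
Your proposal is correct and is exactly the computation the paper has in mind: the paper's proof consists of the single remark that the identity is ``analogous to the proof of naturality for the Levi-Civita connection,'' i.e.\ the standard coordinate argument in which the chain/product rule produces second derivatives of $\varphi$ that cancel against the $\Gamma^{\gbar}$ terms via the transformation law for the Christoffel symbols of $\gbar = \varphi^*g$. You have simply written out explicitly what the paper leaves to the reader, including the correct caveat that $\varphi$ must be a (local) diffeomorphism for that transformation law to apply.
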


\begin{proof}
This is analogous to the proof of naturality for a the Levi-Civita connection.
\end{proof}

An immediate consequence of the previous propositions is that the Laplacian pulls back in the expected way:

\begin{prop} \label{laplacianpullbackprop}
Let $F$ be a $\binom{p}{q}$-tensor on $N$, and let $\varphi^*F$ be the pullback tensor defined as above.  Then
\[
\Delta^{\gbar,h} \varphi^*F = \varphi^*(\Delta^{g,h} F)
\]
\end{prop}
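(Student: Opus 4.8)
The plan is to chain together the two naturality results already established — Proposition \ref{pullbackcov-prop} for the map covariant derivative and Proposition \ref{pullbacktrace-prop} for the metric trace — and feed them into the definition of the map Laplacian. Since $\Delta^{\gbar,h}(\varphi^*F) = \tr^{\gbar}\left[\nabla^{\gbar,h}\nabla^{\gbar,h}(\varphi^*F)\right]$ by definition, it suffices to check that each of the two operations, ``apply $\nabla^{\gbar,h}$'' and ``contract with $\gbar$'', intertwines with $\varphi^*$, and then to compose.

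First I would apply Proposition \ref{pullbackcov-prop} once to obtain $\nabla^{\gbar,h}(\varphi^*F) = \varphi^*(\nabla^{g,h}F)$. The tensor $\nabla^{g,h}F$ is a $\binom{p+1}{q}$-tensor on $N$, since the mixed covariant derivative adds one covariant slot; because Proposition \ref{pullbackcov-prop} is stated for an arbitrary $\binom{p}{q}$-tensor, it applies again to $\nabla^{g,h}F$, giving $\nabla^{\gbar,h}\nabla^{\gbar,h}(\varphi^*F) = \nabla^{\gbar,h}\bigl(\varphi^*(\nabla^{g,h}F)\bigr) = \varphi^*(\nabla^{g,h}\nabla^{g,h}F)$. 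Thus the full second map covariant derivative of the pullback is the pullback of the full second mixed covariant derivative.

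Next I would take the trace. In $\nabla^{\gbar,h}\nabla^{\gbar,h}(\varphi^*F)$ the two slots introduced by the two map covariant derivatives are lower indices, and under the identification above they correspond exactly to the two lower slots introduced by the two mixed covariant derivatives in $\nabla^{g,h}\nabla^{g,h}F$. Proposition \ref{pullbacktrace-prop} states that contracting any pair of lower indices commutes with $\varphi^*$, with $\gbar$ doing the contraction on the $M$ side and $g$ on the $N$ side; applying this to exactly that pair of indices yields $\tr^{\gbar}\left[\varphi^*(\nabla^{g,h}\nabla^{g,h}F)\right] = \varphi^*\bigl(\tr^g\left[\nabla^{g,h}\nabla^{g,h}F\right]\bigr) = \varphi^*(\Delta^{g,h}F)$, which is the asserted identity.

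The argument is essentially bookkeeping: the only point requiring attention is confirming that iterating Proposition \ref{pullbackcov-prop} is legitimate (it is, since it holds for tensors of any type) and that the index pair on which one traces on the $M$ side matches, under pullback, the correct pair on the $N$ side — both of which are immediate from the coordinate formulas for the map covariant derivative and for $\varphi^*$. I do not expect any genuine obstacle; the proposition is a formal consequence of the previously-established naturality of $\nabla^{\gbar,h}$ and of the trace under $\varphi^*$.
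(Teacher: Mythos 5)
Your argument is correct and is exactly the paper's proof: the paper also deduces the result immediately from Propositions \ref{pullbacktrace-prop} and \ref{pullbackcov-prop}, and your write-up simply spells out the iteration of the covariant-derivative naturality and the matching of the traced index pair. No issues.
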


\begin{proof}
This follows immediately from Propositions \ref{pullbacktrace-prop} and \ref{pullbackcov-prop}.
\end{proof}

To finish this section, we apply the preceding results in some examples that will be used in the next section:  Using the difference tensor for $g$ and $h$, define the vector fields
\begin{align*}
(V^{g,h})^{\gamma} &= g^{\alpha \beta} (A^{g,h})_{\alpha \beta}^{\gamma} \\
(Z^{g,h})^{\gamma} &= g^{\mu \alpha} g^{\nu \beta}
				\nabla^{g,h}_{\mu} \nabla^{g,h}_{\nu} (A^{g,h})_{\alpha \beta}^{\gamma}.
\end{align*}
We will make use of powers of the Laplacian of these vector fields and we will need to know how they pull back.  For this, note that $(\Delta^{g,h})^k V^{g,h}$ and $(\Delta^{g,h})^{k-1} Z^{g,h}$, where $k$ indicates the power, are $\binom{0}{1}$-tensors and so their pullbacks will be $\binom{0,0}{0,1}$-tensors.  Specifically, we have the following:
\begin{prop} \label{vectorfieldpullback-prop}
The following relations hold:
\begin{align*}
\varphi^*(A^{g,h}) &= -\nabla^{\gbar,h} d \varphi, \\
\varphi^*\left[(\Delta^{g,h})^{k}V^{g,h}\right] &= -(\Delta^{\gbar,h})^{k+1} \varphi, \\
\varphi^*\left[(\Delta^{g,h})^{k-1}Z^{g,h}\right]
		&= -(\Delta^{\gbar,h})^{k+1} \varphi
				+ \P(\partial^{2k+1} \gbar, (\partial^{2k+1} h) \circ \varphi, \partial^{2k} \varphi).
\end{align*}
\end{prop}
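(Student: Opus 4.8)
The plan is to reduce all three identities to the naturality of the pullback operation together with Lemma~\ref{mixedcovofdidlemma}, deriving the first identity outright and then bootstrapping from it. For the first identity I would apply $\varphi^*$ to the relation $\nabla^{g,h}d\,\id = -A^{g,h}$ of Lemma~\ref{mixedcovofdidlemma}: by Proposition~\ref{pullbackcov-prop} the left side equals $\nabla^{\gbar,h}(\varphi^*(d\,\id))$, and $\varphi^*(d\,\id) = d\varphi$ by Lemma~\ref{pullbackofdidlemma}, so comparison gives $\nabla^{\gbar,h}d\varphi = -\varphi^*(A^{g,h})$ at once.

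For the second identity I would first note that $V^{g,h} = \tr^g A^{g,h}$, the trace taken with $g$ over the two lower indices of $A^{g,h}$. Since $V^{g,h}$ and each iterate $(\Delta^{g,h})^j V^{g,h}$ is a genuine $\binom{0}{1}$-tensor on $N$, iterating Proposition~\ref{laplacianpullbackprop} gives $\varphi^*[(\Delta^{g,h})^k V^{g,h}] = (\Delta^{\gbar,h})^k \varphi^*(V^{g,h})$. For the remaining pullback I would push the trace through with Proposition~\ref{pullbacktrace-prop} and invoke the first identity and the definition of the map Laplacian:
\[
\varphi^*(V^{g,h}) = \tr^{\gbar}\varphi^*(A^{g,h}) = -\tr^{\gbar}\bigl(\nabla^{\gbar,h}\nabla^{\gbar,h}\varphi\bigr) = -\Delta^{\gbar,h}\varphi,
\]
so that $\varphi^*[(\Delta^{g,h})^k V^{g,h}] = -(\Delta^{\gbar,h})^{k+1}\varphi$.

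The third identity is the substantive one. Writing $Z^{g,h}$ as the double $g$-trace of $\nabla^{g,h}\nabla^{g,h}A^{g,h}$ — the outer derivative index paired with one lower index of $A^{g,h}$ and the inner one with the other — Propositions~\ref{pullbacktrace-prop} and~\ref{pullbackcov-prop} together with the first identity give $\varphi^*(Z^{g,h}) = -\tr^{\gbar}\tr^{\gbar}\bigl[(\nabla^{\gbar,h})^4\varphi\bigr]$ in this same ``crossed'' contraction pattern. The heart of the argument is to rewrite this in the pattern appearing in $(\Delta^{\gbar,h})^2\varphi$. Because $\nabla^{\gbar,h}$ is compatible with $\gbar$, hence with $\gbar^{-1}$, the $\gbar$-traces slide freely past the map covariant derivatives, so the only obstruction to reordering the four derivative slots is the non-commutativity of $\nabla^{\gbar,h}$; interchanging adjacent derivatives finitely many times via Proposition~\ref{mapcovswapprop} produces exactly $-(\Delta^{\gbar,h})^2\varphi$ plus a sum of terms of the schematic shape $(\nabla^{\gbar,h})^{\le 2}\varphi \star (Rm^{\gbar} + Rm^{h}\circ\varphi)$, possibly differentiated once more, and one checks these lie in $\P(\partial^3\gbar, (\partial^3 h)\circ\varphi, \partial^2\varphi)$. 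Finally, since $(\Delta^{g,h})^{k-1}Z^{g,h}$ and its intermediate Laplacians are again $\binom{0}{1}$-tensors on $N$, iterating Proposition~\ref{laplacianpullbackprop} reduces the general statement to the case just treated: applying $(\Delta^{\gbar,h})^{k-1}$ to $\varphi^*(Z^{g,h}) = -(\Delta^{\gbar,h})^2\varphi + \P(\partial^3\gbar, (\partial^3 h)\circ\varphi, \partial^2\varphi)$ and absorbing the $2(k-1)$ extra map covariant derivatives into the error term gives the claimed $-(\Delta^{\gbar,h})^{k+1}\varphi + \P(\partial^{2k+1}\gbar, (\partial^{2k+1}h)\circ\varphi, \partial^{2k}\varphi)$.

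The one delicate point — and the step I expect to require the most care — is the derivative bookkeeping in the third identity: one must verify that every curvature correction generated by Proposition~\ref{mapcovswapprop} genuinely lies in $\P(\partial^3\gbar, (\partial^3 h)\circ\varphi, \partial^2\varphi)$, i.e.\ that no term of top order $\partial^{2k+1}\varphi$ in $\varphi$ survives in the remainder. This reduces to the observation that each curvature factor $Rm^{\gbar}$ or $Rm^{h}\circ\varphi$ is produced at the cost of removing two derivatives from $\varphi$ while carrying only two derivatives of $\gbar$ or of $h\circ\varphi$, and that the chain rule applied to $Rm^{h}\circ\varphi$ loads any further derivatives onto $h\circ\varphi$ rather than onto $\varphi$; everything else in the argument is a formal consequence of the pullback naturality established earlier in the section.
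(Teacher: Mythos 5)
Your proposal is correct and follows essentially the same route as the paper: the first identity via Lemma~\ref{mixedcovofdidlemma}, Proposition~\ref{pullbackcov-prop}, and Lemma~\ref{pullbackofdidlemma}; the second by computing $\varphi^*(V^{g,h})=-\Delta^{\gbar,h}\varphi$ with Proposition~\ref{pullbacktrace-prop} and iterating Proposition~\ref{laplacianpullbackprop}; and the third by pulling back the crossed double trace, commuting derivatives via Proposition~\ref{mapcovswapprop}, and absorbing the curvature corrections into the $\P$ remainder. Your care with the derivative counting in the error term is, if anything, slightly more explicit than the paper's, and your signs are consistent with the statement.
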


\begin{proof}
For the first identity, applying Lemma \ref{mixedcovofdidlemma}, Proposition \ref{pullbackcov-prop}, and then Lemma \ref{pullbackofdidlemma} to the difference tensor we have
\begin{align*}
\varphi^*(A^{g,h}) &= \varphi^*(-\nabla^{g,h} d\, \id) \\
			&= -\nabla^{\gbar, h} \varphi^*(d\, \id) \\
			&= -\nabla^{\gbar, h} d \varphi.
\end{align*}

The second identity is proved by induction.  For the base case we use Proposition \ref{pullbacktrace-prop} and the first identity to get
\begin{align*}
\varphi^*(V^{g,h}) &= \varphi^*(\tr^g A^{g,h}) \\
		&= \tr^{\gbar} (\varphi^* A^{g,h}) \\
		&= \tr^{\gbar}(-\nabla^{\gbar, h} d \varphi) \\
		&= - \Delta^{\gbar, h} d \varphi.
\end{align*}
The induction step is a direct application of Proposition \ref{laplacianpullbackprop}.

The third identity also follows by induction.  The base case is similar to that above, but we take some care to track which indices are being contracted in order to make correct use of Proposition \ref{mapcovswapprop}.  We have
\begin{align*}
\bigl[\varphi^*(Z^{g,h})\bigr]^{\gamma} &= \varphi^*\bigl[g^{\mu \alpha} g^{\nu \beta}
					\nabla^{g,h}_{\mu} \nabla^{g,h}_{\nu}
					(A^{g,h})_{\alpha \beta}^{\gamma}\bigr] \\
			&= \gbar^{ia} \gbar^{jb} (\nabla^{\gbar, h}_i \nabla^{\gbar, h}_j
					\nabla^{\gbar, h}_a \nabla^{\gbar, h}_b \varphi)^{\gamma} \\
			&= \gbar^{ia} \gbar^{jb} (\nabla^{\gbar, h}_i \nabla^{\gbar, h}_a						\nabla^{\gbar, h}_j \nabla^{\gbar, h}_b \varphi)^{\gamma}
				+ \Bigl(\nabla^{\gbar, h}_i
					\bigl(\nabla^{\gbar, h}_b \varphi \star Rm^{\gbar}
						+ \nabla^{\gbar, h}_b \varphi
							\star Rm^h \circ \varphi \bigr)_{aj}^{\gamma}\Bigr) \\
			&= \bigl((\Delta^{\gbar, h})^2 \varphi \bigr)^{\gamma}
				+ \P(\partial^{3} \gbar, (\partial^{3} h) \circ \varphi, \partial^2 \varphi).
\end{align*}
Again, the induction step is a direct application of Proposition \ref{laplacianpullbackprop}.
\end{proof}


\section{Uniqueness for the Pure Flow} \label{uniqueness:pure-flow}

In this section, we prove the uniqueness part of Theorem \ref{mainuniquetheorem}.  The existence part of Theorem \ref{mainuniquetheorem} was proved in \cite{BahuaudHelliwell}.  We are working with polynomial natural $\binom{2}{0}$-tensors where the top order terms are of the form given in equation \eqref{ansatz}, which we restate here for convenience
\begin{equation}
\tag{4}
T^g_{k,a,b,c} = (-1)^{k+1} c\bigl(\Delta^{k} Ric
			+ a \Delta^{k} S g - b \Delta^{k-1} \nabla^2 S\bigr) + T_{2k+1}.
\end{equation}
We note that here and in the rest of this section, all geometric objects not adorned by a metric are associated with the metric $g$.  The ambient obstruction tensor is of the form \eqref{ansatz}. In fact, application of the Bianchi identities and the fact that covariant derivatives commute at top order imply that these three top order terms are all that can appear when considering covariant two-tensors resulting from contractions of derivatives of curvature of a single metric.  Moreover, we will see in the symbol analysis that in order to have a strongly elliptic operator, there must be some contribution from $\Delta^k Ric$.  

Due to the diffeomorphism invariance of \eqref{ansatz} we modify the flow by the Lie derivative of the metric with respect to a vector field to obtain a strictly parabolic operator.  We select an appropriate vector field built from contractions of the $\binom{2k+2}{1}$ tensor $(\nabla^{g,h})^{2k} A^{g,h}$.  The choice to use only mixed covariant derivatives instead of covariant derivatives with respect to $g$ is simply to make the map flow analysis simpler, and Proposition \ref{mixedvspure-prop} guarantees that this can be done without adding any complication.

Interestingly there are number of possible vector fields that can serve as building blocks, all coming from different choices of contractions of the indices.  Natural contractions (those not requiring a metric to raise or lower an index), when pulled back by a diffeomorphism, involve the inverse of the diffeomorphism, thus complicating the analysis of the resulting operator.  Authors using vector fields with natural contractions neither avoid this issue nor address how to resolve it.  The choices we make simplify the subsequent analysis by avoiding natural contractions.  One of the vector fields we use, $V^{g,h}$, is a generalization of that used for Ricci flow and is widely used.  The other vector field, $Z^{g,h}$, is (modulo lower order terms) the only other vector field built from $(\nabla^{g,h})^{2k} A^{g,h}$ that avoids natural contractions and is a new alternative to other choices.

The proof of the uniqueness part of Theorem \ref{mainuniquetheorem} proceeds as follows:  First, we perform the necessary symbol analysis so that we may apply Theorem \ref{thm:main-adj-flow} to an appropriate adjusted flow.  Second, we show that the vector field chosen for the adjustment gives rise to an elliptic operator on maps between manifolds, ensuring the existence of such maps.  Finally, we show that solutions to the pure flow give rise to solutions for the adjusted flow and that uniqueness for the adjusted flow passes to uniqueness for the pure flow via these maps.

\subsection{Choosing the vector field and symbol analysis for the adjusted flow}
We recall the two vector fields defined using the difference tensor introduced in Section \ref{pullingbacksection}:
\begin{align*}
 V^{g,h} &=  g^{\alpha \beta} (A^{g,h})_{\alpha \beta}^{\gamma} \partial_{\gamma}, \; \mbox{and} \\
 Z^{g,h} &=  g^{\mu \alpha} g^{\nu \beta} \nabla^{g,h}_{\mu} \nabla^{g,h}_{\nu} (A^{g,h})_{\alpha \beta}^{\gamma} \partial_{\gamma}.
\end{align*}

The following lemma computes the principal symbols of various terms we need.  

\begin{lemma} \label{symbolcomplemma}
The principal symbols of the linearizations of the basic building blocks in \eqref{ansatz} and of the Lie derivatives of the metric with respect to the vector fields we will use are:
\begin{align*}
\langle \sigma(\Delta^k Ric), \eta \rangle
		&= (-1)^{k} \frac{1}{2} |\xi|^{2k-2} \Bigl(|\xi|^4 |\eta|^2
			+ |\xi|^2 \tr\, \eta \langle \xi \otimes \xi, \eta \rangle
			- 2|\xi|^2 \langle \xi \otimes \xi, \tr(\eta \otimes \eta) \rangle \Bigr) \\
\langle \sigma(\Delta^k S g), \eta \rangle
		&= (-1)^{k} |\xi|^{2k-2} \left[\bigl(|\xi|^2 \tr\, \eta \bigr)^2
			- |\xi|^2 \tr\, \eta \langle \xi \otimes \xi, \eta \rangle \right] \\
\langle \sigma(\Delta^{k-1} \nabla^2 S), \eta \rangle
		&= (-1)^{k-1} |\xi|^{2k-2} \Bigl(\langle \xi \otimes \xi, \eta \rangle^2
			- |\xi|^2 \tr\, \eta \langle \xi \otimes \xi, \eta \rangle \Bigr)
\end{align*}
and
\begin{align*}
\langle \sigma \left(\Lie{(\Delta^{g,h})^k V^{g,h}}{g} \right), \eta \rangle
		&= (-1)^{k-1}|\xi|^{2k-2}\Bigl(2|\xi|^2\langle \xi \otimes \xi, \tr(\eta \otimes \eta) \rangle
			- |\xi|^2 \tr\, \eta \langle \xi \otimes \xi, \eta \rangle \Bigr) \\
\langle \sigma \left(\Lie{(\Delta^{g,h})^{k-1} Z^{g,h}}{g} \right), \eta \rangle
		&= (-1)^{k-1} |\xi|^{2k-2} \Bigl(2|\xi|^2\langle \xi \otimes \xi, \tr(\eta \otimes \eta) \rangle
			- \langle \xi \otimes \xi, \eta \rangle^2 \Bigr)
\end{align*}
where $\tr(\eta \otimes \eta)_{jk} = g^{il} \eta_{ij} \eta_{kl}$.  
\end{lemma}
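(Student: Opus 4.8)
The plan is to compute each principal symbol directly from its definition by linearizing the relevant geometric operator at an arbitrary metric $g$, extracting the top-order part, and then substituting $D^\beta \mapsto i\xi_\beta$ (equivalently, replacing each coordinate derivative $\partial_j$ acting on the variation $\eta$ by a factor of $\xi_j$, and tracking the $(-1)^m$ normalization baked into the definition of $\sigma_\xi$). Since all five expressions share the prefactor $|\xi|^{2k-2}$ coming from the $\Delta^{k-1}$, the real content is the symbol of the four second-order-in-disguise operators $\Delta\,Ric$, $\Delta(Sg)$, $\nabla^2 S$, and $\Lie{(\Delta)^? V\text{ or }Z}{g}$ at a point, which I would organize as follows.

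First I would recall the standard linearization formulas at $g$ in a normal-coordinate computation: $\frac{d}{ds}|_{s=0}\Gamma^\gamma_{\alpha\beta} = \tfrac12 g^{\gamma\delta}(\nabla_\alpha\eta_{\beta\delta}+\nabla_\beta\eta_{\alpha\delta}-\nabla_\delta\eta_{\alpha\beta})$, and hence the well-known $\sigma_\xi(D Ric)(\eta) = \tfrac12(\xi\otimes(\iota_\xi\eta) + (\iota_\xi\eta)\otimes\xi - |\xi|^2\eta - (\tr\eta)\,\xi\otimes\xi)$ up to sign conventions, together with $\sigma_\xi(DS)(\eta) = |\xi|^2\tr\eta - \langle\xi\otimes\xi,\eta\rangle$. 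Composing with an extra Laplacian multiplies by $-|\xi|^2$ and contributes the $(-1)^k$ versus $(-1)^{k-1}$ bookkeeping; pairing against $\eta$ with the metric and using $\langle\xi\otimes(\iota_\xi\eta),\eta\rangle = \langle\xi\otimes\xi,\tr(\eta\otimes\eta)\rangle$ and $\langle\xi\otimes\xi,\eta\rangle = \iota_\xi\iota_\xi\eta$ then produces exactly the three scalar quadratic forms listed. The $\Delta^{k-1}\nabla^2 S$ symbol is immediate since $\sigma_\xi(\nabla^2 f)$ just multiplies $f$ by $-\xi\otimes\xi$, so $\sigma(\nabla^2 S)(\eta) = -(\xi\otimes\xi)(|\xi|^2\tr\eta - \langle\xi\otimes\xi,\eta\rangle)$, whose pairing with $\eta$ gives $\langle\xi\otimes\xi,\eta\rangle^2 - |\xi|^2\tr\eta\langle\xi\otimes\xi,\eta\rangle$ after the overall Laplacian sign.

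For the two Lie-derivative terms I would use $\Lie{X}{g} = \nabla X^\flat + (\nabla X^\flat)^T$, so $\sigma_\xi(\Lie{\cdot}{g})$ applied to a vector-field symbol $\zeta$ is the symmetrization $\xi\otimes\zeta^\flat + \zeta^\flat\otimes\xi$ (times $i$, absorbed into the $(-1)^m$). Thus the whole task reduces to computing $\sigma_\xi$ of the linearizations of $(\Delta^{g,h})^kV^{g,h}$ and $(\Delta^{g,h})^{k-1}Z^{g,h}$ as vector fields, and for that the key input is $\frac{d}{ds}|_{s=0}A^{g,h} = \frac{d}{ds}|_{s=0}\Gamma^g$, whose symbol is $\sigma_\xi(DA)(\eta)^\gamma_{\alpha\beta} = -\tfrac{i}{2}(\xi_\alpha\eta^\gamma_{\ \beta} + \xi_\beta\eta^\gamma_{\ \alpha} - \xi^\gamma\eta_{\alpha\beta})$ (indices raised by $g$). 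Contracting $g^{\alpha\beta}$ gives the symbol of $DV$; applying $g^{\mu\alpha}g^{\nu\beta}\nabla_\mu\nabla_\nu$ at top order multiplies by $-\xi^\mu\xi^\nu\delta$-contractions, giving the symbol of $DZ$; then each extra $\Delta^{g,h}$ contributes $-|\xi|^2$. Symmetrizing with $\xi$ and pairing against $\eta$ yields the two stated formulas, with the distinguishing feature being that $V$ produces a $-|\xi|^2\tr\eta\langle\xi\otimes\xi,\eta\rangle$ term while $Z$ produces a $-\langle\xi\otimes\xi,\eta\rangle^2$ term, both sharing the $2|\xi|^2\langle\xi\otimes\xi,\tr(\eta\otimes\eta)\rangle$ piece.

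The main obstacle is purely bookkeeping: getting every sign and factor of $\tfrac12$ right through the chain of linearization $\to$ symbol $\to$ iterated Laplacian $\to$ symmetrization $\to$ pairing with $\eta$, and in particular making sure the $(-1)^m$ in the definition of the principal symbol is applied with the correct $m$ (here $m = 2k+1$ for the obstruction-type terms and $m = 2k+2$ for the Lie-derivative terms, so the parity flips relative to the naive count), which is exactly what accounts for the $(-1)^k$ vs.\ $(-1)^{k-1}$ discrepancy between the curvature symbols and the vector-field symbols. I would do the computation once symbolically in an orthonormal frame adapted to $\xi$ (so $\xi = |\xi|e^1$) to minimize index clutter, then restate the result invariantly; nothing conceptually deep is involved beyond the standard symbol calculus, and Proposition \ref{mixedvspure-prop} guarantees that replacing $\nabla^g$ by $\nabla^{g,h}$ in the definitions of $V$ and $Z$ changes nothing at the level of principal symbols.
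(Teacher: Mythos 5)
The paper states this lemma without proof, so there is nothing to compare against on that side; your strategy of direct linearization followed by symbol substitution is the right (and essentially only) approach, and most of your intermediate ingredients are correct: $\sigma(DS)(\eta) = |\xi|^2\tr\,\eta - \langle\xi\otimes\xi,\eta\rangle$, the rule that each additional Laplacian multiplies the symbol by $-|\xi|^2$ and $\nabla^2$ acting on a scalar contributes $-\xi\otimes\xi$, the linearization of $A^{g,h}$ via $\tfrac{d}{ds}\Gamma^g$, the symmetrization rule for $\Lie{X}{g}$, the identity $\xi_i\xi^k\eta_{kj}\eta^{ij} = \langle\xi\otimes\xi,\tr(\eta\otimes\eta)\rangle$, and the appeal to Proposition \ref{mixedvspure-prop} to justify ignoring the difference between $\nabla^g$ and $\nabla^{g,h}$ at top order. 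These do assemble into the five stated identities.

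However, your final paragraph contains a concrete error in exactly the bookkeeping this lemma exists to settle. All five linearized operators have the same order $2k+2$, so with the paper's convention $\sigma_\xi(P) = (-1)^m\sum_{|\beta|=2m}a_\beta\xi^\beta$ the normalization factor is $(-1)^{k+1}$ for every term; it is not true that ``$m=2k+1$ for the obstruction-type terms and $m=2k+2$ for the Lie-derivative terms'' (those are the orders, not half-orders, $2k+1$ is odd, and in any case the curvature terms also have order $2k+2$). The $(-1)^k$ versus $(-1)^{k-1}$ discrepancy therefore cannot come from different values of $m$. It comes from the signs of the leading coefficients of the linearizations themselves: $DRic(\eta) = -\tfrac12\Delta\eta - \tfrac12\nabla^2\tr\,\eta + (\text{symmetrized }\nabla\,\div\eta) + \dots$ and $DS(\eta) = -\Delta\tr\,\eta + \div\div\eta+\dots$ carry a leading minus that combines with $(-1)^{k+1}$ to give $(-1)^k$, whereas $DV(\eta)_j = (\div\eta)_j - \tfrac12\nabla_j\tr\,\eta$ enters the Lie derivative with a positive leading coefficient, yielding $(-1)^{k+1}=(-1)^{k-1}$. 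Note also that $\Delta^{k-1}\nabla^2 S$, a curvature term, carries $(-1)^{k-1}$, which already contradicts the dichotomy you propose between curvature symbols and vector-field symbols. Followed literally, your prescription would assign normalizations $(-1)^{2k+1}=-1$ and $(-1)^{2k+2}=+1$ and produce wrong signs, which would then propagate into the ellipticity analysis. The fix is simply to drop that remark and apply your own composition rules uniformly with $m=k+1$; the rest of the outline then goes through.
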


For parameters $a, b, c, \alpha, \beta$ to be chosen, we now consider the parabolicity of the adjusted flow
\[
\dvar{t} g = T^g_{k,a,b,c} + \Lie{W^{g,h}_{c,\alpha, \beta}}{g} = L^{g,h}_{k,a,b,c,\alpha,\beta}
\]
where
\begin{align*}
T^g_{k,a,b,c} &= (-1)^{k+1} c\bigl(\Delta^{k} Ric
			+ a \Delta^{k} S g - b \Delta^{k-1} \nabla^2 S\bigr) + T_{2k+1}, \; \mbox{and} \\
W^{g,h}_{c,\alpha,\beta} &= (-1)^{k} c \bigl[\alpha (\Delta^{g,h})^k V^{g,h}
						+ \beta (\Delta^{g,h})^{k-1} Z^{g,h} \bigr].
\end{align*}
In view of Lemma \ref{symbolcomplemma}, the principal symbol of $-L^{g,h}_{k,a,b,c,\alpha,\beta}$ is seen to be
\begin{align*}
\langle \sigma (-L^{g,h}_{k,a,b,c,\alpha,\beta}), \eta \rangle
	= c |\xi|^{2k-2}\biggl[&\frac{1}{2}|\xi|^4 |\eta|^2 \\
	& \quad + (2\alpha + 2\beta - 1) |\xi|^2 \langle \xi \otimes \xi, \tr(\eta \otimes \eta) \rangle \\
	& \quad + (b - \beta) \langle \xi \otimes \xi, \eta \rangle^2 \\
	& \quad + \left(\frac{1}{2} - a - b - \alpha \right) |\xi|^2 \tr\, \eta \langle \xi \otimes \xi, \eta \rangle \\
	& \quad + a (|\xi|^2 \tr\, \eta)^2 \biggr].
\end{align*}

Note that the $|\xi|^4 |\eta|^2$ term arises only from $\Delta^{k} Ric$.  It can be shown by analysis similar to that below that for $-L^{g,h}_{k,a,b,c,\alpha,\beta}$ to be strongly elliptic, this contribution is necessary and that it must be positive.  Once this is known, the tensors of interest can indeed be written in the form \eqref{ansatz}.  Then we choose the coefficients $\alpha$ and $\beta$ in a manner to simultaneously force the vanishing of the second term, and produce a particular choice of coefficient for the fourth term that will lead to a perfect square in the subsequent analysis.  In particular we have

\begin{prop}
Let $c > 0$ and set $\alpha = \frac{1}{2} + a - b$ and $\beta = b - a$.  If $a > -\frac{1}{2(n-1)}$ then $-L^{g,h}_{k,a,b,c,\frac{1}{2} + a - b,b - a}$ is strongly elliptic.  If $a \leq -\frac{1}{2(n-1)}$, then $-L^{g,h}_{k,a,b,c,\alpha,\beta}$ is not strongly elliptic for any choice of $\alpha$ and $\beta$.
\end{prop}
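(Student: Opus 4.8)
The plan is to substitute the prescribed values $\alpha = \frac{1}{2} + a - b$ and $\beta = b - a$ into the symbol formula for $-L^{g,h}_{k,a,b,c,\alpha,\beta}$ displayed just above and watch it collapse. With these values one checks directly that $2\alpha + 2\beta - 1 = 0$, that $b - \beta = a$, and that $\frac{1}{2} - a - b - \alpha = -2a$, so the second term vanishes and the three remaining $a$-weighted terms assemble into a perfect square:
\[
\langle \sigma(-L^{g,h}_{k,a,b,c,\frac{1}{2}+a-b,\,b-a}), \eta \rangle = c|\xi|^{2k-2}\Bigl[\tfrac{1}{2}|\xi|^4|\eta|^2 + a\bigl(\langle \xi \otimes \xi, \eta\rangle - |\xi|^2 \tr\,\eta\bigr)^2\Bigr].
\]
Since $\tr\,\eta = \langle g, \eta\rangle$, the quadratic form inside the square equals $\langle \xi \otimes \xi - |\xi|^2 g,\, \eta\rangle$, and an elementary computation with the $g$-inner product on $\Sigma^2(M)$ gives $|\xi \otimes \xi - |\xi|^2 g|^2 = (n-1)|\xi|^4$. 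This is the identity that produces the threshold $-\frac{1}{2(n-1)}$.

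For the forward implication I would then argue by cases on the sign of $a$. If $a \ge 0$ the bracket is at least $\tfrac{1}{2}|\xi|^4|\eta|^2$. If $a < 0$, the Cauchy--Schwarz inequality together with the norm identity above gives $\bigl(\langle \xi\otimes\xi,\eta\rangle - |\xi|^2\tr\,\eta\bigr)^2 \le (n-1)|\xi|^4|\eta|^2$, so the bracket is at least $\bigl(\tfrac{1}{2} + a(n-1)\bigr)|\xi|^4|\eta|^2$. Recalling $c > 0$ and that the operator has order $2m = 2k+2$, in both cases $\langle \sigma(-L), \eta\rangle \ge \Lambda |\xi|^{2m}|\eta|^2$ with $\Lambda = c\min\{\tfrac12,\ \tfrac12 + a(n-1)\}$, which is strictly positive precisely when $a > -\frac{1}{2(n-1)}$; the estimate is uniform in $\xi$, $\eta$ and the base point, so strong ellipticity follows.

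For the second assertion --- failure of strong ellipticity for \emph{every} $\alpha$ and $\beta$ when $a \le -\frac{1}{2(n-1)}$ --- I would test the \emph{general} symbol formula on the single tensor $\eta = |\xi|^2 g - \xi \otimes \xi$. It satisfies $\eta_{ij}\xi^j = 0$, whence $\langle \xi \otimes \xi, \eta\rangle = 0$ and also $\langle \xi \otimes \xi, \tr(\eta \otimes \eta)\rangle = 0$ (this pairing is $g^{il}(\eta_{ij}\xi^j)(\eta_{kl}\xi^k)$), so every $\alpha$- and $\beta$-dependent term disappears. Using $\tr\,\eta = (n-1)|\xi|^2$ and $|\eta|^2 = (n-1)|\xi|^4$, only the first and last terms of the symbol remain and
\[
\langle \sigma(-L^{g,h}_{k,a,b,c,\alpha,\beta}), \eta\rangle = c(n-1)\bigl(\tfrac{1}{2} + a(n-1)\bigr)|\xi|^{2k+6} = c\bigl(\tfrac{1}{2} + a(n-1)\bigr)|\xi|^{2m}|\eta|^2.
\]
When $a \le -\frac{1}{2(n-1)}$ the factor $\tfrac12 + a(n-1)$ is nonpositive, so no positive $\Lambda$ can satisfy the defining inequality on this $\eta$, for any $\alpha$ and $\beta$.

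The computations are all short; the content is in two structural observations. First, the specified $\alpha$ and $\beta$ are exactly the values that cancel the mixed cross term and complete the square --- this is essentially forced once one decides to aim for a perfect square. Second, $\xi \otimes \xi - |\xi|^2 g$ (equivalently its companion $|\xi|^2 g - \xi\otimes\xi$), the pure-trace part of a symmetric two-tensor transverse to $\xi$, simultaneously extremizes that square --- giving the sharp threshold in the ``if'' direction --- and annihilates the full $\alpha,\beta$-dependence of the symbol --- giving the ``only if'' direction uniformly in $\alpha$ and $\beta$. Isolating this one distinguished direction is the crux; everything else is routine linear algebra.
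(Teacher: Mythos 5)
Your proposal is correct and follows essentially the same route as the paper's proof: complete the square with the prescribed $\alpha,\beta$, apply Cauchy--Schwarz with $|\xi\otimes\xi - |\xi|^2 g|^2 = (n-1)|\xi|^4$ for $a<0$, and test the general symbol on $\eta = |\xi|^2 g - \xi\otimes\xi$ for the converse. (Your final exponent $|\xi|^{2m}|\eta|^2$ in the non-ellipticity computation is in fact the corrected version of a small typo in the paper's last display; the only point you gloss over is that the symbol is computed with the time-dependent $g$-inner product, so one should note, as the paper does, that $g$ and $h$ norms are uniformly comparable for short time on a compact manifold.)
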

\begin{proof}
Define $L^{g,h}_{k,a,b,c} = L^{g,h}_{k,a,b,c,\frac{1}{2} + a - b,b - a}$ and observe
\begin{align} \label{local-eqn-parabolicity}
\langle \sigma (-L^{g,h}_{k,a,b,c}), \eta \rangle
	&= c |\xi|^{2k-2} \biggl[ \frac{1}{2} |\xi|^4 |\eta|^2
				+ a \bigl(\langle \xi \otimes \xi, \eta \rangle^2
				- 2 |\xi|^2 \tr\, \eta \langle \xi \otimes \xi, \eta \rangle
				+ (|\xi|^2 \tr\, \eta)^2 \bigr) \biggr] \nonumber \\
	&= c |\xi|^{2k-2} \biggl[ \frac{1}{2} |\xi|^4 |\eta|^2
				+ a \bigl( \langle \xi \otimes \xi, \eta \rangle - |\xi|^2 \tr\, \eta \bigr)^2 \biggr] \nonumber  \\
	&= c |\xi|^{2k-2} \biggl[ \frac{1}{2} |\xi|^4 |\eta|^2
				+ a \langle \xi \otimes \xi - |\xi|^2 g, \eta \rangle^2 \biggr],
\end{align}
where we use that $|\xi|^2 \tr\, \eta = \langle |\xi|^2 g, \eta \rangle$ to obtain the final inequality.

To check strong ellipticity, we need to confirm that there exists a positive constant $\Lambda > 0$ where for all $\xi$ and $\eta$, 
\[
\bigl\langle \sigma(-L^{g,h}_{k,a,b,c}),\eta \bigr\rangle^h \geq  \Lambda (|\xi|^h)^{2k+2} (|\eta|^h)^2.
\]
Since we are working on a compact manifold, the norms with respect to $h$ are equivalent to those of $g$.  Moreover, since $g(0) = h$, the inner product with respect to $g$ and the inner product with respect to $h$ are comparable for a short time.  Therefore it is enough to show that
\[
\bigl\langle \sigma(-L^{g,h}_{k,a,b,c}),\eta \bigr\rangle \geq  \Lambda |\xi|^{2k+2} |\eta|^2.
\]
Looking at the equation \eqref{local-eqn-parabolicity} above, we find that ellipticity is automatic when $a \geq 0$ since the second term on the right hand side is a square.  For $a < 0$, we use the Cauchy Schwarz inequality to get
\begin{align*}
\langle  \xi \otimes \xi - |\xi|^2g, \eta \rangle^2
		&\leq \bigl| \xi \otimes \xi - |\xi|^2g\bigr|^2 |\eta|^2 \\
		&= (n-1)|\xi|^4 |\eta|^2,
\end{align*}
with equality when $\eta = \xi \otimes \xi - |\xi|^2g$.  Thus we have
\begin{align*}
\langle \sigma (-L^{g,h}_{k,a,b,c}), \eta \rangle
	&\geq c |\xi|^{2k-2} \biggl[ \frac{1}{2} |\xi|^4 |\eta|^2
				+ a (n-1)|\xi|^4 |\eta|^2 \biggr] \\
	&= c \left(\frac{1}{2} + a(n-1)\right)|\xi|^{2k+2} |\eta|^2.
\end{align*}
Therefore we have strong ellipticity for all $a > -\frac{1}{2(n-1)}$.

Next, consider the case where $a \leq -\frac{1}{2(n-1)}$.  We find that when $\eta = |\xi|^2 g - \xi \otimes \xi$, we have
\begin{align*}
&\langle \xi \otimes \xi, \tr(\eta \otimes \eta) \rangle = 0 \\
&\langle \xi \otimes \xi, \eta \rangle = 0 \\
&\tr\, \eta = (n-1)|\xi|^2 \\
&|\eta|^2 = (n-1) |\xi|^4 = |\xi|^2 \tr\, \eta
\end{align*}
so that, regardless of our choices for $\alpha$ and $\beta$,
\begin{align*}
\langle -L^{g,h}_{k,a,b,c,\alpha,\beta}, \eta \rangle
	&= c|\xi|^{2k-2} \biggl[\frac{1}{2}|\xi|^4 |\eta|^2 + a(|\xi|^2 \tr\, \eta)^2 \biggr] \\
	&= c|\xi|^{2k-2} \biggl[\frac{1}{2}|\xi|^4 |\eta|^2 + a(n-1)|\xi|^4|\eta|^2 \biggr] \\
	&= c\left(\frac{1}{2}+a(n-1)\right)|\xi|^{2k-2} |\eta|^2
\end{align*}
and so the operator is not elliptic when $a \leq -\frac{1}{2(n-1)}$.
\end{proof}

As as a consequence of this proposition and Theorem \ref{thm:main-adj-flow}, the adjusted flow
\begin{equation*}
\left\{ \begin{aligned} 
\dvar{t} g &= T^g_{k,a,b,c} + \Lie{W^{g,h}_{c,\frac{1}{2}+a-b,b-a}}{g}\\
g(0) &= h
\end{aligned} \right.
\end{equation*}
 has a unique short-time solution if $c > 0$ and $a > \frac{-1}{2(n-1)}$.

\subsection{Parabolicity for the diffeomorphism}

We now turn our attention to showing that the vector field $W^{g,h}_{c,\frac{1}{2} + a - b,b-a}$ used in adjusting the flow can be expressed as an elliptic operator on a map between manifolds.  Using Proposition \ref{vectorfieldpullback-prop} we compute

\begin{align*}
W^{g,h}_{c,\frac{1}{2} + a - b,b-a} \circ \varphi
		&= (-1)^{k} c \left[\left(\frac{1}{2} + a - b\right) (\Delta^{g,h})^k V^{g,h} \circ \varphi
						+ (b - a) (\Delta^{g,h})^{k-1} Z^{g,h} \circ \varphi \right] \\
		&= (-1)^{k} c \left[\left(\frac{1}{2} + a - b\right) \varphi^*(\Delta^{g,h})^k V^{g,h}
						+ (b - a) \varphi^*(\Delta^{g,h})^{k-1} Z^{g,h} \right] \\
		&= (-1)^{k} c \left[\left(\frac{1}{2} + a - b\right) (-(\Delta^{\gbar,h})^{k+1} \varphi)
						+ (b - a) (-(\Delta^{\gbar,h})^{k+1} \varphi) \right] \\
		&\qquad
			+ \P(\partial^{2k+1} \gbar, (\partial^{2k+1} h) \circ \varphi, \partial^{2k} \varphi) \\
		&= (-1)^{k+1} \frac{c}{2} (\Delta^{\gbar,h})^{k+1} \varphi
			+ \P(\partial^{2k+1} \gbar, (\partial^{2k+1} h) \circ \varphi, \partial^{2k} \varphi).
\end{align*}
With this, we see that
\[
\dvar{t} \varphi = -W^{g,h}_{c,\frac{1}{2} + a - b,b-a} \circ \varphi
\]
becomes
\[
\dvar{t} \varphi = (-1)^{k} \frac{c}{2} (\Delta^{\gbar,h})^{k+1} \varphi
				+ \P(\partial^{2k+1} \gbar, (\partial^{2k+1} h) \circ \varphi, \partial^{2k} \varphi).
\]
When paired with the initial condition $\varphi(0) = \id$, this is solvable by parabolic theory on a compact manifold.

For future reference, let $E^{\gbar,h}_{a,b,c}$ be the operator
\[
E^{\gbar,h}_{a,b,c} (\varphi)
		= (-1)^{k} \frac{c}{2} (\Delta^{\gbar,h})^{k+1} \varphi
			+ \P(\partial^{2k+1} \gbar, (\partial^{2k+1} h) \circ \varphi, \partial^{2k} \varphi)
\]
corresponding to the vector field $W^{g,h}_{c,\frac{1}{2} + a - b,b-a}$.

\subsection{Uniqueness for the Pure Flow}
The final step is to to show that a solution to the pure flow transfers into a solution of the adjusted flow and then use uniqueness for the adjusted flow to establish uniqueness for the pure flow.  

A solution to the pure flow transfers to a solution of the adjusted flow whenever the tensor defining the pure flow is natural and the diffeomorphisms correspond to the vector field defining the adjustment:

\begin{lemma} \label{transitionlemma}
Let $T^{\gbar}$ be a smooth natural tensor for a single metric $\gbar$ and suppose $\gbar$ solves
\begin{equation*}
\left\{ \begin{aligned} 
\dvar{t} \gbar &= T^{\gbar} \\
\gbar(0) &= h.
\end{aligned} \right.
\end{equation*}
Let $\varphi: I \times (M,\gbar) \rightarrow (M,h)$ be a time-dependent diffeomorphism solving
\begin{equation*}
\left\{ \begin{aligned} 
\dvar{t} \varphi &= E^{\gbar,h} (\varphi) \\
\varphi(0) &= \id.
\end{aligned} \right.
\end{equation*}
Let $g(t) = [\varphi(t)]_* \gbar(t) = [\varphi^{-1}(t)]^* \gbar(t)$ and let $-W^{g,h}$ be the time-dependent vector field associated to the flow coming from $\varphi$ so that
\[
E^{\gbar,h} (\varphi) = -W^{g,h} \circ \varphi.
\]
Then $g$ solves the adjusted flow
\begin{equation*}
\left\{ \begin{aligned} 
\dvar{t} g &= T^g + \Lie{W^{g,h}}{g} \\
g(0) &= h.
\end{aligned} \right.
\end{equation*}
\end{lemma}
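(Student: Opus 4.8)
The plan is to differentiate $g(t) = [\varphi(t)^{-1}]^{*}\gbar(t)$ directly in $t$ and to recognize the two resulting terms as $T^{g}$ and $\Lie{W^{g,h}}{g}$.  Write $\psi_{t} = \varphi(t)^{-1}$, so $g_{t} = \psi_{t}^{*}\gbar_{t}$, and recall the standard time-dependent pullback rule: if a smooth family of diffeomorphisms $\psi_{t}$ is generated by a time-dependent vector field $Z_{t}$, meaning $\partial_{t}\psi_{t} = Z_{t}\circ\psi_{t}$, then for any smooth time-dependent tensor $\alpha_{t}$,
\[
\dtime\bigl(\psi_{t}^{*}\alpha_{t}\bigr) = \psi_{t}^{*}\bigl(\partial_{t}\alpha_{t} + \Lie{Z_{t}}{\alpha_{t}}\bigr).
\]
I would justify this by splitting the derivative into the contribution from varying the map (with $\alpha$ frozen, which produces the Lie derivative term) and the contribution from varying $\alpha$ (with the map frozen, which produces $\psi_{t}^{*}\partial_{t}\alpha_{t}$); the frozen-$\alpha$ statement is itself checked on functions via the chain rule and extended to all tensors by tensoriality.

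The next step is to identify $Z_{t}$.  Let $X_{t}$ be the generator of $\varphi_{t}$, i.e.\ $\partial_{t}\varphi_{t} = X_{t}\circ\varphi_{t}$; since $\partial_{t}\varphi = E^{\gbar,h}(\varphi) = -W^{g,h}\circ\varphi$ by hypothesis, we have $X_{t} = -W^{g,h}$.  Differentiating the identity $\varphi_{t}\circ\psi_{t} = \id$ in $t$ and applying the chain rule expresses $\partial_{t}\psi_{t}$ in terms of $X_{t}$ and $d\varphi_{t}$, and rearranging yields $Z_{t} = -(\psi_{t})_{*}X_{t}$, the negative of the pushforward of $X_{t}$ by $\psi_{t}$.

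Then I would substitute $\partial_{t}\gbar_{t} = T^{\gbar_{t}}$ into the pullback rule to obtain
\[
\dtime g_{t} = \psi_{t}^{*}\bigl(T^{\gbar_{t}}\bigr) + \psi_{t}^{*}\bigl(\Lie{Z_{t}}{\gbar_{t}}\bigr).
\]
For the first term, naturality of $T$ under the diffeomorphism $\psi_{t}$ gives $\psi_{t}^{*}(T^{\gbar_{t}}) = T^{\psi_{t}^{*}\gbar_{t}} = T^{g_{t}}$.  For the second term, naturality of the Lie derivative under diffeomorphisms gives $\psi_{t}^{*}(\Lie{Z_{t}}{\gbar_{t}}) = \Lie{(\varphi_{t})_{*}Z_{t}}{g_{t}}$, using $(\psi_{t}^{-1})_{*} = (\varphi_{t})_{*}$; and the cancellation $(\varphi_{t})_{*}Z_{t} = -(\varphi_{t}\circ\psi_{t})_{*}X_{t} = -X_{t} = W^{g,h}$ turns this into $\Lie{W^{g,h}}{g_{t}}$.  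Hence $\partial_{t}g = T^{g} + \Lie{W^{g,h}}{g}$, and the initial condition follows from $g(0) = \psi_{0}^{*}\gbar(0) = \id^{*}h = h$.

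I expect the only real obstacle to be bookkeeping with the maps and their directions: $\varphi_{t}$ runs from the copy of $M$ carrying $\gbar$ to the copy carrying $h$, the fields $g$ and $W^{g,h}$ live on the latter while $\gbar$ and $Z_{t}$ live on the former, and one must track the sign in $Z_{t} = -(\psi_{t})_{*}X_{t}$ and the identity $(\psi_{t}^{-1})_{*} = (\varphi_{t})_{*}$ carefully so that the cancellation $(\varphi_{t})_{*}Z_{t} = -X_{t}$ emerges with the correct sign.  No genuinely hard analysis is involved; the argument rests entirely on the elementary pullback rule together with the two naturality statements, all of which are classical.
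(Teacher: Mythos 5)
Your proof is correct and is precisely the standard computation that the paper itself omits, deferring instead to the reference \cite{ChowK}: the time-dependent pullback formula, the identification of the generator of $\psi_t=\varphi_t^{-1}$ as $Z_t=-(\psi_t)_*X_t$, and naturality of $T$ and of the Lie derivative, with the sign bookkeeping handled correctly so that $(\varphi_t)_*Z_t=W^{g,h}$. Nothing further is needed.
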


The details for the proof of this result are now standard.  See for example \cite{ChowK} for a proof in the context of Ricci flow.

Now let $\gbar_1$ and $\gbar_2$ both solve the pure flow
\begin{equation*}
\left\{ \begin{aligned} 
\dvar{t} \gbar &= T^{\gbar}_{k,a,b,c} \\
\gbar(0) &= h
\end{aligned} \right.
\end{equation*}
where $c>0$ and $a > -\frac{1}{2(n-1)}$.  For each $\gbar_i$ let $\varphi_i$ be a diffeomorphism that solves
\begin{equation*}
\left\{ \begin{aligned} 
\dvar{t} \varphi_i &= E^{\gbar_i,h}_{a,b,c}(\varphi_i) \\
\varphi_i(0) &= \id.
\end{aligned} \right.
\end{equation*}
By Lemma \ref{transitionlemma}, these diffeomorphisms give rise to two solutions $g_i(t) = [\varphi_i^{-1}(t)]^* \gbar_i(t)$ of the adjusted flow
\begin{equation*}
\left\{ \begin{aligned} 
\dvar{t} g &= T^g_{k,a,b,c} + \Lie{W^{g,h}_{c,\frac{1}{2} + a - b,b-a}}{g}\\
g(0) &= h.
\end{aligned} \right.
\end{equation*}
By Theorem \ref{thm:main-adj-flow}, $g_1 = g_2$ and as a consequence, the adjusting vector fields are also the same.  This implies that $\varphi_1$ and $\varphi_2$ both solve
\begin{equation*}
\left\{ \begin{aligned} 
\dvar{t} \varphi &= -W^{g,h}_{c,\frac{1}{2} + a - b,b-a} \circ \varphi\\
\varphi &= \id
\end{aligned} \right.
\end{equation*}
but, interpreted as an ODE at each point of $M$, solutions must be unique and we conclude $\varphi_1 = \varphi_2$.  Finally this means that
\[
\gbar_1(t) = [\varphi_1(t)]^*g_1(t) = [\varphi_2(t)]^* g_2(t) = \gbar_2(t)
\]
which establishes uniqueness and finishes the proof of Theorem \ref{mainuniquetheorem}.


\bibliographystyle{amsalpha}
\bibliography{geomflow2}

\end{document}